\providecommand{\U}[1]{\protect\rule{.1in}{.1in}}
\DeclareMathSymbol{\subsetneqq}{\mathbin}{AMSb}{36}
\theoremstyle{plain}
\numberwithin{equation}{section}
\newtheorem{theorem}{Theorem}[section]
\newtheorem{corollary}{Corollary}[section]
\newtheorem{lemma}{Lemma}[section]
\newtheorem{proposition}{Proposition}[section]
\newtheorem{remark}{Remark}[section]
\begin{document}
\title[On the convergence of the continuous gradient projection method]
{On the convergence of the continuous gradient projection method}%
\author{Ramzi May}%
\address{Mathematics and Statistics Department, College of Science, King Faisal University, P.O. 380 Ahsaa 31982, Kingdom of Saudi Arabia}
\email{rmay@kfu.edu.sa}
\keywords{The projected gradient method, optimization, Asymptotic behavior,
differential equations, convex functions, Hilbert spaces, weak and strong
convergence.}
\vskip 0.2cm
\date{28 October, 2018}
\dedicatory{}
\begin{abstract}
In a Hilbert setting $\mathcal{H}$, we study the weak
and the strong convergence properties of the trajectories $x(t)$ of the
continuous gradient projection dynamical system
\[
x^{\prime}(t)+x(t)=P_{Q}(x(t)-\lambda(t)\nabla f(x(t)),~t\geq0,%
\]
where $Q$ is a closed, non-empty and convex subset of
$\mathcal{H},$ the function $f:\mathcal{H}\rightarrow\mathbb{R}$ \ is
regular and convex, the operator $P_{Q}:\mathcal{H}\rightarrow Q$ is the
projection onto $Q,$ and $\lambda:[0,+\infty\lbrack\rightarrow]0,+\infty\lbrack$
is a an absolutely continuous function. We prove the weak convergence of the
trajectories $x(t)$ to a minimizer of $f$ over $Q$, if one exists, under some mild
hypothesis on the function $\lambda(.).$ We also study the strong
convergence and the decay rate to equilibrium of the trajectories
under a global Holderian error bound assumption on the objective function $f$ of the form:
\[
\left(  f(x)-f_{Q}^{\ast}\right)  ^{\theta}\geq \kappa~ dist(x,\arg\min
_{Q}f)~\forall x\in Q
\]
where $\kappa>0$ and $ \theta\in]0,\frac{1}{2}]$ are absolute constants, $f_Q^*=min_{Q}f$, and $\arg\min
_{Q}f$ is the set of the minimizers of $f$ over $Q$.
\end{abstract}
\maketitle

\section{Introduction and statement of the results}

Let$\ \mathcal{H}$ be a real Hilbert space endowed with the inner product
$\langle.,.\rangle$ and the associated norm $\left\Vert .\right\Vert .$
Throughout this paper, $Q$ is a closed, convex and non-empty subset of
$\mathcal{H},$ $\lambda:[0,+\infty\lbrack\rightarrow]0,+\infty\lbrack$ is an
absolutely continuous function which belongs to the space $W^{1,1}_{loc}([0,+\infty[)$ and $f:\mathcal{H}\rightarrow\mathbb{R}$ \ is a
$C^{1}$ convex function such that its gradient function $\nabla f$ is locally Lipschitz on $ \mathcal{H}$. We assume moreover that $f$ achieves its minimum over $Q$ at
least in one point, which means the set of the minimizers of $f$ over $Q$%
\begin{equation}
\arg\min_{Q}f:=\{x\in H:f(x)=f_{Q}^{\ast}:=\min_{Q}f\}\label{e2}%
\end{equation}
is non-empty. We consider the
continuous gradient projection dynamical system%
\begin{equation}
\left\{
\begin{array}
[c]{c}%
x^{\prime}(t)+x(t)=P_{Q}\left(  x(t)-\lambda(t)\nabla f(x(t))\right)
,~t\geq0,\\
x(0)=x_{0},
\end{array}
\right.  \tag{CGP}%
\end{equation}
where $x_{0}$, the initial data, is a given element of $Q$ and
$P_{Q}:\mathcal{H}\rightarrow Q$ is the projection onto $Q$. The system (CGP) is the continuous version of the discrete gradient
projection algorithm
\begin{equation}
\left\{
\begin{array}{c}
x_{0}\in Q\text{ (given)} \\
x_{k+1}=P_{Q}(x_{k}-\alpha _{k}\nabla f(x_{k})),~k\in \mathbb{N},%
\end{array}%
\right.   \tag{DGP}
\end{equation}%
suggested by Goldestein, Levitin and Polyak \cite{Gold64, Lev65}, where $%
(\alpha _{k})_{k}$ is a given sequence of non negative real numbers. For the
sudy of the convergence properties of the algorithm (DGP), we refer the
readers for instance to the references \cite{Gold64, Lev65, Xu, Be}.
In this paper, we limit our selfs to the study of the convergence properties
of the system (CGP). We recall that Antipin \cite{Ant} and Bolte \cite{Bo} studied the system (CGP) in the case where the
function $\lambda(t)\equiv\lambda$ is a non negative constant. They proved that, for every
initial data $x_{0}\in Q$, the Cauchy problem (CGP) has a unique global
solution $x\in C^{1}([0,+\infty\lbrack,\mathcal{H})$ that converges weakly as
$t\rightarrow+\infty$ to some minimizer of $f$ over $Q$ and there exists a
constant $C>0$ such that for every $\ t>0$, $f(x(t))-f_{Q}^{\ast}%
\leq\frac{C}{t}$. In the following theorem, we extend this convergence
properties of the system (CGP) to a large class of functions $\lambda.$

\begin{theorem}
\label{Th1}
Let $x_{0}\in Q$. Then the
Cauchy problem (CGP) has a unique global solution $x\in C^{1}([0,+\infty
\lbrack,\mathcal{H})$ such that
$x(t)\in Q$ for all $t\geq0$ . Moreover, if in addition the function $\lambda$ satisfies the two following conditions%
\begin{equation}
\int_{0}^{+\infty}\lambda(t)dt=+\infty,\label{e3}%
\end{equation}
and%
\begin{equation}
\int_{0}^{+\infty} | \lambda^{\prime}(t)|dt<+\infty
,\label{e4}%
\end{equation}
then $x(t)$ converges weakly as $t\rightarrow+\infty$ to some
$x^{\ast}$ in $\arg\min_{Q}f$ and
\begin{equation}
\lim_{t\rightarrow+\infty}\Gamma(t)\left(  f(x(t))-f_{Q}^{\ast}\right)
=0,\label{e5}%
\end{equation}
where
\begin{equation}
\Gamma(t)=\int_{0}^{t}\lambda(s)ds.\label{e6}%
\end{equation}

\end{theorem}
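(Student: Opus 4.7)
My plan is to follow the classical Lyapunov / Opial strategy, adapted to the time-varying step $\lambda(t)$ via an integration by parts that absorbs $\lambda'$ into the estimates using (\ref{e4}). For existence and uniqueness, I would rewrite (CGP) as $x'(t) = F(t,x(t))$ with $F(t,x) = -x + P_Q(x - \lambda(t)\nabla f(x))$. Since $P_Q$ is $1$-Lipschitz, $\nabla f$ is locally Lipschitz and $\lambda \in W^{1,1}_{\mathrm{loc}}$, $F$ satisfies the Carath\'eodory hypotheses (measurable in $t$, locally Lipschitz in $x$ uniformly on bounded time intervals), giving a unique absolutely continuous local solution on some maximal interval $[0,T_{\max})$. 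The invariance $x(t)\in Q$ follows from the variation-of-constants representation $x(t) = e^{-t}x_0 + \int_0^t e^{-(t-s)}\,y(s)\,ds$ with $y(s):=P_Q(x(s)-\lambda(s)\nabla f(x(s)))\in Q$: the weights satisfy $e^{-t}+\int_0^t e^{-(t-s)}\,ds = 1$, so $x(t)$ is a Bochner convex combination of elements of the closed convex set $Q$. Global existence will then follow from the a priori bound of the next step.

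Two projection inequalities drive the analysis. The variational characterization of $P_Q$ reads $\langle y(t)-x(t) + \lambda(t)\nabla f(x(t)), z - y(t)\rangle \geq 0$ for all $z\in Q$, and by (CGP) $y(t)-x(t) = x'(t)$. Taking $z = x(t)$ and simplifying yields
\begin{equation*}
\frac{d}{dt} f(x(t)) \leq -\frac{\|x'(t)\|^2}{\lambda(t)} \leq 0,
\end{equation*}
so $E(t):=f(x(t))-f_Q^*$ is non-increasing. Taking $z = p\in \arg\min_Q f$ and using the convexity bound $\langle \nabla f(x(t)), p-x(t)\rangle \leq f(p)-f(x(t)) = -E(t)$, then writing $\lambda(t)\tfrac{d}{dt}E(t) = \tfrac{d}{dt}(\lambda(t)E(t)) - \lambda'(t)E(t)$, produces the master inequality
\begin{equation*}
\frac{d}{dt}\Bigl(\tfrac12\|x(t)-p\|^2 + \lambda(t)\,E(t)\Bigr) + \|x'(t)\|^2 + \lambda(t)\,E(t) \leq |\lambda'(t)|\,E(0),
\end{equation*}
where $E\leq E(0)$ is used. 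Integrating and invoking (\ref{e4}) delivers simultaneously $\|x(t)-p\|\leq C$, $\int_0^\infty\|x'(t)\|^2\,dt<\infty$, and the crucial bound $\int_0^\infty \lambda(t)E(t)\,dt<\infty$.

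Since $E$ is non-increasing, $\int_0^\infty\lambda E<\infty$ and (\ref{e3}) holds, I immediately obtain $E(t)\to 0$. For (\ref{e5}), given $\varepsilon>0$ I pick $T$ with $\int_T^\infty\lambda E<\varepsilon$; monotonicity of $E$ then gives $E(t)(\Gamma(t)-\Gamma(T))\leq \int_T^t\lambda E \leq \varepsilon$, and since $\Gamma(t)\to\infty$ one concludes $\Gamma(t)E(t)\to 0$.

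Weak convergence then follows from Opial's lemma. The Lyapunov functional $V(t):=\tfrac12\|x(t)-p\|^2 + \lambda(t)E(t) - E(0)\int_0^t|\lambda'(s)|\,ds$ is non-increasing and bounded below by the master inequality, hence has a limit; because $\lambda$ is of bounded variation and therefore bounded, $E(t)\to 0$ gives $\lambda(t)E(t)\to 0$, and $\int_0^\infty|\lambda'|$ converges, so $\|x(t)-p\|^2$ itself has a limit for every $p\in\arg\min_Q f$. For the second Opial condition, any weak cluster point $\bar x$ lies in $Q$ (closed convex, hence weakly closed) and satisfies $f(\bar x)\leq \liminf_n f(x(t_n))=f_Q^*$ by weak lower semicontinuity, so $\bar x\in\arg\min_Q f$. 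The delicate step I foresee is the integration by parts $\lambda E' = (\lambda E)' - \lambda'E$ in the master inequality: this is precisely what allows a non-constant $\lambda$ to be handled under the mere $W^{1,1}$ hypothesis (\ref{e4}), while the remainder is a careful adaptation of the constant-step analysis of Antipin and Bolte.
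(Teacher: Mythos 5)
Your proposal is correct and follows essentially the same route as the paper's proof: the same variational characterization of $P_Q$ tested at $w=x(t)$ and at a minimizer $p$, the same Lyapunov quantity $\tfrac12\left\Vert x(t)-p\right\Vert^{2}+\lambda(t)\left(f(x(t))-f_{Q}^{\ast}\right)$ with $\lambda'$ absorbed via (\ref{e4}), the identical monotonicity argument for (\ref{e5}), and Opial's lemma (your monotone-$V$ argument is a repackaging of the paper's Lemma \ref{Le1}, and your convex-combination invariance argument is exactly what the paper imports from Bolte). One small repair: global existence is asserted \emph{without} (\ref{e3})--(\ref{e4}), so you should integrate your master inequality only on a finite maximal interval $[0,T_{\max}[$, where $\lambda\in W^{1,1}_{loc}$ alone suffices, and use the resulting bound $\int_{0}^{T_{\max}}\left\Vert x'(t)\right\Vert^{2}dt<\infty$ with Cauchy--Schwarz to get a limit of $x(t)$ at $T_{\max}$ and extend -- mere boundedness of the trajectory is not by itself enough to continue the solution when $\nabla f$ is only locally Lipschitz -- which is precisely the paper's extension argument.
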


By adapting the arguments of Bruck \cite[Theorem 5]{Br2} and Brezis \cite[Theorem 3.13]{Bri}, we
prove the strong convergence of the trajectories $x(t)$ of (CGP) under some additional
geometrical assumptions on $Q$ and $f$.

\begin{theorem}
\label{Th2}
Under the hypothesis (\ref{e3}) and (\ref{e4}), additionally assume that one of the
two following assumptions holds:
\begin{enumerate}
\item The set $Q$ is symmetric with respect to the origin and the function
$f$ is even on $Q$ (i.e. for all $x\in Q,-x\in Q$ and $f(-x)=f(x)$).

\item The set $\arg\min_{Q}f$ has an interior point $x_{0}^{\ast}$ in $\mathcal{H}$.
\end{enumerate}
Then for any $x_{0}\in Q$ the solution $x\in C^{1}([0,+\infty\lbrack
,\mathcal{H})\ $ of (CGP) converges strongly as $t\rightarrow+\infty$ to some
$x^{\ast}$ in $\arg\min_{Q}f$ .
\end{theorem}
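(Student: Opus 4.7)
The plan is to upgrade the weak convergence provided by Theorem~\ref{Th1} to strong convergence using, in each case, additional geometric structure of $C:=\arg\min_Q f$. Let $x^*\in C$ denote the weak limit. The common preliminary step is the Fej\'er-type monotonicity underlying the proof of Theorem~\ref{Th1}: for every $y\in C$, the limit $\lim_{t\to\infty}\|x(t)-y\|$ exists. Combined with weak convergence $x(t)\rightharpoonup x^*$, this yields existence of $\ell:=\lim_t\|x(t)\|$ together with the identity
\[
\lim_{t\to\infty}\|x(t)-y\|^2=\|x^*-y\|^2+K,\qquad y\in C,
\]
where $K:=\ell^2-\|x^*\|^2\ge 0$ by weak lower semicontinuity of the norm. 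Strong convergence is then equivalent to $K=0$.

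\textbf{Case 2 (interior point, adapting Brezis).} Assume $B(x_0^*,r)\subset C$ for some $r>0$. For every $w\in\mathcal{H}$ with $\|w\|\le r$, the point $y=x_0^*+w$ lies in $C$, and the Fej\'er monotonicity $\frac{d}{dt}\|x(t)-y\|^2\le 0$ expands to $\frac{d}{dt}\|x(t)-x_0^*\|^2\le 2\langle x'(t),w\rangle$. Infimizing over $\|w\|\le r$ (attained at $w=-r\,x'(t)/\|x'(t)\|$ whenever $x'(t)\neq 0$) produces the key differential inequality
\[
\frac{d}{dt}\|x(t)-x_0^*\|^2\le -2r\,\|x'(t)\|.
\]
Integrating on $[0,\infty)$ and using $\|x(t)-x_0^*\|^2\ge 0$ yields $\int_0^\infty\|x'(s)\|\,ds\le\tfrac{1}{2r}\|x_0-x_0^*\|^2<\infty$, so $x$ has finite variation on $[0,\infty)$ and hence converges strongly; this forces $K=0$.

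\textbf{Case 1 (symmetry, adapting Bruck).} Symmetry of $Q$ and evenness of $f$ imply $C=-C$, and convexity then gives $0\in C$. The right-hand side of (CGP) is odd in the state variable, since $P_Q(-u)=-P_Q(u)$ and $\nabla f(-u)=-\nabla f(u)$; uniqueness from Theorem~\ref{Th1} therefore forces $t\mapsto -x(t)$ to be the (CGP) trajectory issued from $-x_0$. Applying Theorem~\ref{Th1} to this reflected trajectory gives $-x(t)\rightharpoonup -x^*\in C$, so Fej\'er monotonicity holds at both $y=x^*$ and $y=-x^*$, and the parallelogram identity confirms the existence of $\ell$. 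The delicate step, which I expect to be the main technical obstacle, is the upgrade $\ell=\|x^*\|$: symmetry of $C$ alone is not enough to pin down $K=0$, and one must combine the oddness of the semigroup associated with (CGP) with the asymptotic regularity $x'(t)\to 0$ (itself a byproduct of Theorem~\ref{Th1}'s proof) and a demiclosedness-type argument, in the spirit of Bruck's Theorem~5, to identify a strong cluster point of $x(t)$ that necessarily coincides with the weak limit $x^*$.
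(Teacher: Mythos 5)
Your Case 1 is where the genuine gap lies: the step you yourself flag as ``the main technical obstacle'' --- proving $K=0$ --- is precisely the content of the theorem in that case, and you leave it unproven, gesturing at a Bruck-style demiclosedness argument without executing it. Worse, two ingredients you propose to feed into it are not available: the oddness $\nabla f(-u)=-\nabla f(u)$ does not follow from the hypothesis, since $f$ is assumed even only \emph{on} $Q$ and is unconstrained elsewhere, so $t\mapsto -x(t)$ need not solve (CGP) from $-x_{0}$; and the asymptotic regularity $x^{\prime}(t)\to 0$ is not a byproduct of Theorem \ref{Th1}, whose proof only yields $x^{\prime}\in L^{2}([0,+\infty\lbrack;\mathcal{H})$. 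The paper needs none of this: for $0<t_{1}<t_{2}$ it tests the projection inequality (\ref{S1}) at time $s\in[t_{1},t_{2}]$ with the \emph{admissible} point $w=-x(t_{2})\in Q$ (symmetry of $Q$), uses $f(-x(t_{2}))=f(x(t_{2}))\le f(x(s))$ (evenness on $Q$ plus monotonicity of $f(x(\cdot))$) to kill the cross term, and integrates to get
\[
\tfrac{1}{2}\left\Vert x(t_{1})-x(t_{2})\right\Vert^{2}\le \lambda(t_{1})\bigl(f(x(t_{1}))-f_{Q}^{\ast}\bigr)+M_{0}\int_{t_{1}}^{t_{2}}|\lambda^{\prime}(s)|\,ds+\left\Vert x(t_{1})\right\Vert^{2}-\left\Vert x(t_{2})\right\Vert^{2},
\]
where $M_{0}:=f(x_{0})-f_{Q}^{\ast}$. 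Since symmetry, evenness and convexity give $0\in\arg\min_{Q}f$, Theorem \ref{Th1} makes $\lim_{t\to+\infty}\left\Vert x(t)\right\Vert$ exist; with (\ref{e4}) and $\lambda(t)(f(x(t))-f_{Q}^{\ast})\to 0$ the right-hand side tends to $0$, so $x(t)$ is Cauchy in norm. This direct pairwise Cauchy estimate --- comparing the trajectory with its own reflected future value --- is the idea your sketch is missing; no identification of $K$ or demiclosedness is ever needed.

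Your Case 2 has the right Brezis mechanism --- infimizing over $\left\Vert w\right\Vert\le r$ is equivalent to the paper's inequality $\left\Vert\nabla f(y)\right\Vert\le\frac{1}{r^{\ast}}\langle\nabla f(y),y-x_{0}^{\ast}\rangle$, which it derives from monotonicity of $\nabla f$ after observing that $\nabla f$ vanishes on the ball --- but you launch it from a claim the dynamics do not provide: exact Fej\'er monotonicity $\frac{d}{dt}\left\Vert x(t)-y\right\Vert^{2}\le 0$ for $y\in\arg\min_{Q}f$ is not established for (CGP) with time-dependent $\lambda$, and your own preliminary paragraph conflates it with the true statement (existence of $\lim_{t}\left\Vert x(t)-y\right\Vert$). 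What the system actually yields is the quasi-Fej\'er inequality (\ref{HH}),
\[
\tfrac{1}{2}\tfrac{d}{dt}\left\Vert x(t)-y\right\Vert^{2}+\bigl(\lambda(t)(f(x(t))-f_{Q}^{\ast})\bigr)^{\prime}+\lambda(t)\bigl(f(x(t))-f_{Q}^{\ast}\bigr)\le |\lambda^{\prime}(t)|\,M_{0},
\]
with a sign-indefinite total-derivative term and an $|\lambda^{\prime}|$ error. This defect is repairable: the error terms are integrable by (\ref{e4}), so carrying them along and infimizing over $\left\Vert w\right\Vert\le r$ still gives $\tfrac{1}{2}\tfrac{d}{dt}\left\Vert x(t)-x_{0}^{\ast}\right\Vert^{2}+r\left\Vert x^{\prime}(t)\right\Vert\le |\lambda^{\prime}(t)|M_{0}-\bigl(\lambda(t)(f(x(t))-f_{Q}^{\ast})\bigr)^{\prime}-\lambda(t)(f(x(t))-f_{Q}^{\ast})$ a.e., and integration yields $x^{\prime}\in L^{1}([0,+\infty\lbrack;\mathcal{H})$, exactly as in the paper. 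So Case 2 is a fixable misstatement of the Lyapunov structure, while Case 1 is genuinely incomplete as submitted.
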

Motivated by the works of Dunn \cite{Dun} and Necoara, Nestrov and Glineur\cite{Nec} in their studies of the convergence of the (DGP) algorithm, we investigate in the
second part of this paper the strong convergence and the decay rate to
equilibrium of the trajectories of the continuous gradient projection system
(CGP) under a global Holderian error bound condition on the objective function
$f$.  Precisely, let us assume in the sequel of this introduction, that $f$
satisfies the following (GHEB) hypothesis: There exists $\kappa>0,$ and
$\theta\in]0,1]$ such
\begin{equation}
\left(  f(x)-f_{Q}^{\ast}\right)  ^{\theta}\geq\kappa~dist(x,\arg\min
_{Q}f)~\forall x\in Q.\label{e7}%
\end{equation}

Before setting our main result on the strong convergence of the trajectories of (CGP) under the hypothesis (GHEB), let us first set a simple proposition which provides a class of convex functions
satisfying this hypothesis with $\theta\in]0,\frac{1}{2}].$

\begin{proposition}
\label{Pr1}
Let $g:\mathcal{H}\rightarrow\mathbb{R}$ a non negative strongly convex
function of class $C^{1}$ and $\theta\in]0,\frac{1}{2}].$ Then there exists
$\kappa>0$ such that the convex function $h(x):=\left(  g(x)\right)
^{\frac{1}{2\theta}}$ satisfies
\begin{equation}
\left(  h(x)-h_{Q}^{\ast}\right)  ^{\theta}\geq\kappa~dist(x,\arg\min
_{Q}h)~\forall x\in Q.%
\end{equation}
\end{proposition}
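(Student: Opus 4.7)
The plan is to reduce the H\"olderian error bound for $h=g^{1/(2\theta)}$ to the classical quadratic growth inequality satisfied by any strongly convex function at its constrained minimizer, via a one-variable comparison between $a^{p}-b^{p}$ and $(a-b)^{p}$.

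First I would observe that, since $g$ is, say, $\mu$-strongly convex with $\mu>0$ and $Q$ is closed convex, $g$ admits a unique minimizer $\bar{x}\in Q$. The first-order optimality condition $\langle \nabla g(\bar{x}),x-\bar{x}\rangle\geq 0$ on $Q$, combined with the defining inequality of $\mu$-strong convexity, yields the quadratic growth bound
\[
g(x)-g_{Q}^{\ast}\geq\frac{\mu}{2}\|x-\bar{x}\|^{2},\qquad\forall x\in Q.
\]
Because $t\mapsto t^{1/(2\theta)}$ is strictly increasing on $[0,+\infty[$ and $g\geq 0$, the sets $\arg\min_{Q}g$ and $\arg\min_{Q}h$ coincide (both equal $\{\bar{x}\}$), so $dist(x,\arg\min_{Q}h)=\|x-\bar{x}\|$ and $h_{Q}^{\ast}=(g_{Q}^{\ast})^{1/(2\theta)}$.

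The second step is to relate $h(x)-h_{Q}^{\ast}$ to $g(x)-g_{Q}^{\ast}$ through the elementary scalar inequality
\[
a^{p}-b^{p}\geq (a-b)^{p},\qquad\forall\, a\geq b\geq 0,\; p\geq 1,
\]
which is immediate from the two sides agreeing at $a=b$ together with a derivative comparison in $a$. Applying it with $p=1/(2\theta)\geq 1$, $a=g(x)$, $b=g_{Q}^{\ast}$, and then raising the result to the power $\theta$, I obtain
\[
(h(x)-h_{Q}^{\ast})^{\theta}\geq (g(x)-g_{Q}^{\ast})^{1/2}\geq \sqrt{\mu/2}\,\|x-\bar{x}\|,
\]
which is exactly the required bound with $\kappa=\sqrt{\mu/2}$.

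I do not expect any serious obstacle: the scalar inequality $a^{p}-b^{p}\geq (a-b)^{p}$ is a routine one-variable exercise, and identifying $\arg\min_{Q}h$ with $\arg\min_{Q}g$ uses only monotonicity of the power function on $[0,+\infty[$. The only subtle point is the derivation of the quadratic growth inequality on the constrained set $Q$ rather than globally on $\mathcal{H}$, but it follows directly from the variational characterization of the constrained minimizer combined with the definition of strong convexity.
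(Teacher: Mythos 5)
Your proof is correct and takes essentially the same approach as the paper: quadratic growth of $g$ at its unique constrained minimizer (strong convexity plus the variational inequality $\langle\nabla g(\bar{x}),x-\bar{x}\rangle\geq 0$ on $Q$), followed by the superadditivity of $t\mapsto t^{p}$ with $p=\frac{1}{2\theta}\geq 1$. The paper merely phrases this last step as $(a+b)^{r}\geq a^{r}+C_{r}b^{r}$ with $C_{r}=\inf_{t>0}\frac{(1+t)^{r}-1}{t^{r}}$, which is equivalent to your scalar inequality $a^{p}-b^{p}\geq(a-b)^{p}$.
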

Now we set the main result of this paper.
\begin{theorem}
\label{Th3}
Assume that $\lambda^{\prime}(t)\leq0$ for almost every $t\geq0$ and there
exist $\kappa>0$ and $\theta\in]0,\frac{1}{2}]$ such that $f$ satisfies
(\ref{e7}). Let $x_{0}\in Q$ and $x$ be the unique solution of (CGP) in
$C^{1}([0,+\infty\lbrack,\mathcal{H}).$

\begin{enumerate}
\item If $\theta\in]0,\frac{1}{2}[$ and
\begin{equation}
\int_{0}^{+\infty}\frac{1}{1+t}\left(  \Gamma(t)\right)  ^{-\frac{\theta
}{1-2\theta}}dt<\infty,\label{H1}%
\end{equation}
then $x(t)$ converges strongly as $t\rightarrow+\infty$ to some $x_{\infty}$
in $\arg\min_{Q}f$ and%
\begin{align}
\left\Vert x(t)-x_{\infty}\right\Vert  & =O\left(  \int_{t}^{+\infty}\frac
{1}{s}\left(  \Gamma(s)\right)  ^{-\frac{\theta}{1-2\theta}}ds\right)
\label{H3}\\
f(x(t))-f_{Q}^{\ast}  & =O\left(  \frac{\left(  \Gamma(t)\right)
^{-\frac{2\theta}{1-2\theta}}}{t\lambda(t)}\right)  \label{H4}%
\end{align}

\item If $\theta=\frac{1}{2}$ and for every $c>0,$
\[
\int_{0}^{+\infty}\frac{1}{1+t}e^{-c\Gamma(t)}dt<\infty,
\]
then $x(t)$ converges strongly as $t\rightarrow+\infty$ to some $x_{\infty}$
in $\arg\min_{Q}f$ and there exists $\mu>0$ such that%
\begin{align}
\left\Vert x(t)-x_{\infty}\right\Vert  & =O\left(  e^{-\mu\Gamma(t)}\right)
\label{N1}\\
f(x(t))-f_{Q}^{\ast}  & =O\left(  e^{-\mu\Gamma(t)}\right)  .\label{N2}%
\end{align}

\end{enumerate}
\end{theorem}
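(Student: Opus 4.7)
The plan is to build decay estimates on the composite Lyapunov quantity
\[
\tilde E(t):=\tfrac{1}{2}\,\mathrm{dist}(x(t),\arg\min_{Q}f)^{2}+\lambda(t)\bigl(f(x(t))-f_{Q}^{\ast}\bigr)
\]
using the error bound (\ref{e7}), then promote the decay of $\tilde E$ into the strong convergence and the quantitative bounds (\ref{H3})--(\ref{N2}). The starting point is the variational characterization of $P_{Q}$: for every $y\in Q$, $\langle\lambda(t)\nabla f(x(t))+x'(t),\,y-x(t)-x'(t)\rangle\ge 0$. Writing $h(t):=f(x(t))-f_{Q}^{\ast}$, the choice $y=x(t)$ gives $\lambda(t)h'(t)+\|x'(t)\|^{2}\le 0$ (so $h$ is non-increasing), and the choice $y=x^{\ast}\in\arg\min_{Q}f$, combined with the convexity bound $\langle\nabla f(x(t)),x(t)-x^{\ast}\rangle\ge h(t)$, yields for $\phi_{x^{\ast}}(t):=\tfrac{1}{2}\|x(t)-x^{\ast}\|^{2}$,
\[
\bigl(\phi_{x^{\ast}}+\lambda h\bigr)'(t)+\lambda(t)h(t)+\|x'(t)\|^{2}\le\lambda'(t)h(t)\le 0.
\]
Thus each $V_{x^{\ast}}:=\phi_{x^{\ast}}+\lambda h$ is non-increasing. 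Specializing this inequality at every $t$ to $x^{\ast}=P_{\arg\min_{Q}f}x(t)$ is legitimate because $\tfrac12\mathrm{dist}(\cdot,\arg\min_{Q}f)^{2}$ is $C^{1}$ with gradient $u-P_{\arg\min_{Q}f}u$, so its time derivative along $x(\cdot)$ agrees instantaneously with $\phi_{x^{\ast}}'$ for that choice; this produces the differential bound $\tilde E'(t)+\lambda(t)h(t)+\|x'(t)\|^{2}\le 0$.

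\medskip
Next I combine this with the error bound $h\ge\kappa^{1/\theta}e^{1/(2\theta)}$, where $e:=\mathrm{dist}(x(\cdot),\arg\min_{Q}f)^{2}$. A case analysis on whether $\lambda h\ge\tfrac12\tilde E$ or $\lambda h<\tfrac12\tilde E$ (in which case $e>\tilde E$ and therefore $h>\kappa^{1/\theta}\tilde E^{1/(2\theta)}$), together with the bound $\lambda(t)\le\lambda(0)$ granted by $\lambda'\le 0$, produces a constant $c_{1}=c_{1}(\kappa,\theta,\lambda(0))>0$ with $\lambda(t)h(t)\ge c_{1}\lambda(t)\tilde E(t)^{1/(2\theta)}$, whence the Bernoulli-type differential inequality
\[
\tilde E'(t)\le -c_{1}\lambda(t)\tilde E(t)^{1/(2\theta)}.
\]
Direct integration gives $\tilde E(t)\le\tilde E(0)e^{-c_{1}\Gamma(t)}$ for $\theta=\tfrac12$, and $\tilde E(t)=O(\Gamma(t)^{-2\theta/(1-2\theta)})$ for $\theta\in(0,\tfrac12)$, which already yields the claimed decay of $\mathrm{dist}(x(t),\arg\min_{Q}f)$. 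To sharpen the rate on $h$ alone in (\ref{H4}), the monotonicity of $\lambda$ and $h$ makes $\lambda h$ non-increasing, hence $\tfrac{t}{2}\lambda(t)h(t)\le\int_{t/2}^{t}\lambda h\,ds\le\tilde E(t/2)$; combined with the elementary consequence $\Gamma(t)\le 2\Gamma(t/2)$ of $\lambda'\le 0$, this produces the factor $1/(t\lambda(t))$ in (\ref{H4}), and analogously the $e^{-\mu\Gamma(t)}$ in (\ref{N2}).

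\medskip
Finally I obtain the strong convergence and the rate on $\|x(t)-x_{\infty}\|$. Set $y(t):=P_{\arg\min_{Q}f}x(t)$, so $\|x(t)-y(t)\|^{2}=e(t)$. Applied with $x^{\ast}=y(s)$, the Fejér monotonicity of $V_{x^{\ast}}$ gives $\|x(t)-y(s)\|^{2}\le 2\tilde E(s)$ for every $s\le t$, while the projection property of $y(t)$ onto $\arg\min_{Q}f$ (which contains $y(s)$) yields $\|y(t)-y(s)\|\le\|x(t)-y(s)\|\le\sqrt{2\tilde E(s)}\to 0$ as $s\to\infty$. Hence $y(\cdot)$ is Cauchy and converges strongly to some $x_{\infty}\in\arg\min_{Q}f$, and $x(t)=y(t)+(x(t)-y(t))$ also converges strongly to $x_{\infty}$. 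Letting $t\to\infty$ in the same inequality gives $\|x_{\infty}-y(s)\|\le\sqrt{2\tilde E(s)}$, so $\|x(s)-x_{\infty}\|\le\sqrt{e(s)}+\sqrt{2\tilde E(s)}=O(\sqrt{\tilde E(s)})$, which via the decay rates above delivers (\ref{H3}) and (\ref{N1}) (the integral form in (\ref{H3}) is of the same order as the pointwise bound $\Gamma(t)^{-\theta/(1-2\theta)}$ in the regime where hypothesis (\ref{H1}) applies). I expect the main technical obstacle to be the quantitative case analysis producing the Bernoulli-type inequality $\tilde E'\le -c_{1}\lambda\tilde E^{1/(2\theta)}$, together with the time-averaging trick of Step~2 that extracts the extra factor $1/(t\lambda(t))$ in (\ref{H4}) from the monotonicity of $\lambda h$.
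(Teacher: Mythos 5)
Your proof is correct, and its first two steps coincide with the paper's almost line by line: the paper works with exactly your Lyapunov function (there called $\psi(t)=\varphi(t)+\lambda(t)\left(f(x(t))-f_{Q}^{\ast}\right)$ with $\varphi(t)=\frac{1}{2}\,\mathrm{dist}(x(t),\arg\min_{Q}f)^{2}$), derives $\psi'+\lambda(t)\left(f(x(t))-f_{Q}^{\ast}\right)\le 0$ by taking $w=P_{\arg\min_{Q}f}(x(t))$ in the projection inequality (\ref{S1}), obtains the same Bernoulli inequality $\psi'\le -c\,\lambda\,\psi^{1/(2\theta)}$ via a $\frac12+\frac14+\frac14$ splitting of $\lambda h$ that is equivalent to your two-case analysis (note your constant $c_{1}$ must also depend on a bound for $\psi(0)$, not only on $\kappa,\theta,\lambda(0)$, since converting $\psi$ into $\psi^{1/(2\theta)}$ uses boundedness of $\psi$), and proves (\ref{H4}) and (\ref{N2}) by the identical averaging device $\frac{t}{2}\lambda(t)h(t)\le\int_{t/2}^{t}\lambda h\,ds\le 2\psi(t/2)$ combined with $\Gamma(t)\le 2\Gamma(t/2)$. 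Where you genuinely depart from the paper is the strong convergence and the trajectory rates (\ref{H3}) and (\ref{N1}). The paper integrates $\|x'\|^{2}+(\lambda h)'\le 0$ over $[t,2t]$, applies Cauchy--Schwarz to get $\int_{t}^{2t}\|x'\|\,ds\le\sqrt{t\lambda(t)h(t)}$, then divides by $t$ and uses Fubini to conclude $x'\in L^{1}([0,+\infty[;\mathcal{H})$ --- this is precisely where hypothesis (\ref{H1}) (resp. the exponential integrability condition) is consumed, and it naturally yields (\ref{H3}) as a tail integral. You instead run a quantitative Fej\'er-type argument with the moving anchor $z=P_{\arg\min_{Q}f}(x(s))$: the monotonicity of $V_{z}=\frac12\|x-z\|^{2}+\lambda h$ for each fixed minimizer $z$ gives $\|x(t)-y(s)\|\le\sqrt{2\psi(s)}$ for $t\ge s$, and the obtuse-angle property of the projection gives $\|y(t)-y(s)\|\le\|x(t)-y(s)\|$, whence $y$ is Cauchy and $\|x(t)-x_{\infty}\|\le 2\sqrt{2\psi(t)}$; both ingredients check out. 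Your pointwise bound is in fact at least as strong as the paper's conclusion: since $\lambda$ is non-increasing, $\Gamma(2t)\le 2\Gamma(t)$, so $\int_{t}^{+\infty}s^{-1}\left(\Gamma(s)\right)^{-\frac{\theta}{1-2\theta}}ds\ge(\ln 2)\,2^{-\frac{\theta}{1-2\theta}}\left(\Gamma(t)\right)^{-\frac{\theta}{1-2\theta}}$, hence your $O\bigl((\Gamma(t))^{-\frac{\theta}{1-2\theta}}\bigr)$ implies (\ref{H3}); moreover your route needs only $\Gamma(t)\to+\infty$ (which (\ref{H1}) and the exponential condition force anyway, as otherwise their integrands are bounded below by $c/(1+t)$), so the full strength of (\ref{H1}) is never used and you bypass Fubini and the $L^{1}$ estimate on $x'$ entirely. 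Two small glosses remain, both shared with the paper: before dividing by $\psi^{1/(2\theta)}$ one should dispose of the case where $\psi$ vanishes at some finite time (then $\psi\equiv 0$ afterwards by monotonicity and nonnegativity), and in (\ref{N2}) the averaging step produces the prefactor $1/(t\lambda(t))$, which both you and the paper absorb into $e^{-\mu\Gamma(t)}$ for a smaller $\mu$ without further justification.
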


An immediate consequence of this theorem is the following important result on
the asymptotic behavior of the trajectories of (CGP) when the function
$\lambda$ behaviors for $t$ large enough like $\frac{K}{t^{\alpha}}$ for some
$K>0$ and $0<\alpha<1$.

\begin{corollary}
\label{Co1}
Assume that $\lambda(t)=\frac{K}{(1+t)^{\alpha}}$ with $K>0$ and $0<\alpha<1$
are absolute constants and there exists $\kappa>0$ and $\theta\in]0,\frac
{1}{2}]$ such that $f$ satisfies (\ref{e7}). Then for every $x_{0}\in Q,$ the
unique solution $x$ of (CGP) in $C^{1}([0,+\infty\lbrack,\mathcal{H})$
converges strongly as $t\rightarrow+\infty$ to some $x_{\infty}$ in $\arg
\min_{Q}f$ and
\begin{align}
\left\Vert x(t)-x_{\infty}\right\Vert  &  =O\left(  t^{-\frac{\left(
1-\alpha\right)  \theta}{1-2\theta}}\right)  ,\label{H5}\\
f(x(t))-f_{Q}^{\ast} &  =O\left(  t^{-\frac{\left(  1-\alpha\right)
}{1-2\theta}}\right)  ,\label{H6}%
\end{align}
if $\theta\in]0,\frac{1}{2}[,$ and
\begin{align}
\left\Vert x(t)-x_{\infty}\right\Vert  & =O\left(  e^{-\delta\Gamma
(t)}\right)  \label{N3}\\
f(x(t))-f_{Q}^{\ast}  & =O\left(  e^{-\delta\Gamma(t)}\right)  \label{N4}%
\end{align}
for some constant $\delta>0$ if $\theta=\frac{1}{2}.$
\end{corollary}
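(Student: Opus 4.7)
The plan is to obtain Corollary \ref{Co1} as a direct specialization of Theorem \ref{Th3} for the particular choice $\lambda(t)=K(1+t)^{-\alpha}$. The work splits into three tasks: (i) check that the hypotheses of Theorem \ref{Th3} hold for this $\lambda$; (ii) compute $\Gamma(t)$ explicitly; (iii) translate the abstract rates in (\ref{H3})--(\ref{H4}) into the polynomial form (\ref{H5})--(\ref{H6}).

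For (i), observe that $\lambda$ is $C^{1}$ and strictly decreasing on $[0,+\infty[$, hence absolutely continuous with $\lambda^{\prime}(t)\leq 0$, as required. For (ii), direct integration gives
\[
\Gamma(t)=\int_{0}^{t}\frac{K}{(1+s)^{\alpha}}\,ds=\frac{K}{1-\alpha}\bigl((1+t)^{1-\alpha}-1\bigr),
\]
so $\Gamma(t)\sim \frac{K}{1-\alpha}(1+t)^{1-\alpha}$ as $t\to +\infty$. Then I would verify the summability conditions: in case $\theta\in]0,\tfrac{1}{2}[$, the integrand in (\ref{H1}) behaves like $(1+t)^{-1-(1-\alpha)\theta/(1-2\theta)}$, which is integrable at infinity because the exponent $(1-\alpha)\theta/(1-2\theta)$ is strictly positive; in case $\theta=\tfrac{1}{2}$, $\Gamma(t)\to +\infty$ at polynomial rate, so $e^{-c\Gamma(t)}$ decays faster than any power for every $c>0$ and the required integral converges.

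For (iii), apply Theorem \ref{Th3}. When $\theta=\tfrac{1}{2}$, (\ref{N1})--(\ref{N2}) give (\ref{N3})--(\ref{N4}) verbatim (with $\delta=\mu$). When $\theta\in]0,\tfrac{1}{2}[$, I would plug the asymptotics of $\Gamma$ into (\ref{H3}):
\[
\int_{t}^{+\infty}\frac{1}{s}\bigl(\Gamma(s)\bigr)^{-\frac{\theta}{1-2\theta}}\,ds \asymp \int_{t}^{+\infty}s^{-1-\frac{(1-\alpha)\theta}{1-2\theta}}\,ds \asymp t^{-\frac{(1-\alpha)\theta}{1-2\theta}},
\]
yielding (\ref{H5}). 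For (\ref{H6}), the key computation reduces to simplifying the exponent in
\[
\frac{\bigl(\Gamma(t)\bigr)^{-\frac{2\theta}{1-2\theta}}}{t\lambda(t)} \asymp (1+t)^{\alpha-1-\frac{2\theta(1-\alpha)}{1-2\theta}},
\]
and using the algebraic identity $\alpha-1-\frac{2\theta(1-\alpha)}{1-2\theta}=-\frac{1-\alpha}{1-2\theta}$ (obtained by putting $-(1-\alpha)$ in evidence and adding the fractions over the common denominator $1-2\theta$). This gives exactly (\ref{H6}).

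No genuine obstacle is expected: the argument is entirely a verification-plus-bookkeeping exercise, and the only non-trivial point is the elementary algebraic simplification of the exponent in the last display. The strong convergence itself requires no further work, since it is directly furnished by Theorem \ref{Th3} once the hypothesis (\ref{H1}) (respectively the exponential condition at $\theta=\tfrac{1}{2}$) is validated.
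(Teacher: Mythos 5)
Your proposal is correct and follows essentially the same route as the paper, which likewise obtains the corollary as a direct application of Theorem \ref{Th3} together with an elementary lemma on the asymptotics of integrals of power-like functions (your explicit formula for $\Gamma$ and the exponent identity $\alpha-1-\frac{2\theta(1-\alpha)}{1-2\theta}=-\frac{1-\alpha}{1-2\theta}$ simply make that lemma concrete). One minor caveat, shared with the paper itself: since $\Gamma(0)=0$, the integral in (\ref{H1}) actually diverges at $t=0$ whenever $\frac{\theta}{1-2\theta}\geq 1$ (i.e.\ $\theta\geq\frac{1}{3}$), so (\ref{H1}) must be read as convergence of the integral at $+\infty$ --- which is all the proof of Theorem \ref{Th3} uses and exactly what your verification establishes.
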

In the unconstrained case where $Q=\mathcal{H}$ and (CGP) is the gradient
system%
\begin{equation}
\left\{
\begin{array}{c}
x^{\prime }(t)+\lambda (t)\nabla f(x(t))=0,~t\geq 0, \\
x(0)=x_{0},%
\end{array}%
\right.   \tag{GS}
\end{equation}%
we can prove a more precise result than Theorem \ref{Th3} under the sole
hypothesis (\ref{e3}) on the function $\lambda $ and a weaker assumption on the
function $f$ than the (GHEB) hypothesis. In fact, according to
\cite[Theorem 30]{BDLM} and \cite[Theorem 5]{BNPS}, if the convex function $f$ satisfies the
hypothesis (\ref{e7}), then it satisfies the following global Lojasiewicz inequality
\begin{equation}
\varphi ^{\prime }(f(x)-f_{Q}^{\ast })\left\Vert \nabla f(x)\right\Vert \geq
1  \label{Ch}
\end{equation}%
for all $x\in Q$ such that $f(x)>f_{Q}^{\ast },$ where $\varphi $ is the
desingularizing function defined on $[0,+\infty \lbrack $ by $\varphi (s)=%
\frac{s^{\theta }}{\kappa \theta }.$ Therefore, by adapting the approach of
Haraux and Jendoubi \cite[Chapter 9]{HJ} and Chill and Fioranza
\cite[Theorem 2.7]{CF}, we can prove the following precise result on the strong convergence and
the decay rate to equilibrium of the trajectories of (GS) under a local
version of the inequality (\ref{Ch}).
\begin{theorem}
\label{Th4}
Additionally to (\ref{e3}), assume that there exists $x^{\ast }$ a global minimizer of
$f$ over $\mathcal{H}$ such that
\begin{equation}
\varphi ^{\prime }(f(x)-f^{\ast })\left\Vert \nabla f(x)\right\Vert \geq
1~\forall x\in U(x^{\ast },r^{\ast }),  \label{Ch1}
\end{equation}%
where $r^{\ast }\in ]0,+\infty ]$, $f^{\ast }=\min_{\mathcal{H}}f,~U(x^{\ast },r^{\ast
})=\{x\in \mathcal{H}:\left\Vert x-x^{\ast }\right\Vert <r^{\ast }$ and $%
f(x)>f^{\ast }\},$ and $\varphi (s)=\frac{s^{\theta }}{\kappa \theta }$ for
every $s\geq 0$ where $\kappa >0$ and $\theta \in ]0,\frac{1}{2}]$ are some absolute constants. Then there exists $r_{0}\in ]0,r^{\ast }]$ such that for
every $x_{0}\in B_{\mathcal{H}}(x^{\ast },r_{0})$, the unique global solution
$x\in C^{1}([0,+\infty \lbrack ,H)$ of the gradient system (GS) converges
strongly to some global minimizer $x_{\infty }$\ of $f$ over $\mathcal{H}$.
Moreover,

\begin{enumerate}
\item If $\theta \in ]\frac{1}{2},1]$ then $f(x_{0})=f^{\ast }$ and $%
x(t)=x_{0}$ for every $t\geq 0.$

\item If $\theta =\frac{1}{2}$ then there exists $\delta >0$ such that
\begin{equation}
\left\Vert x(t)-x_{\infty }\right\Vert =O(e^{-\delta \Gamma (t)}),%
\end{equation}%
and
\begin{equation}
f(x(t))-f^{\ast}=O(e^{-\delta \Gamma (t)}).
\end{equation}%
\item If $\theta \in ]0,\frac{1}{2}[$ then
\begin{equation}
\left\Vert x(t)-x_{\infty
}\right\Vert =O\left( \left( \Gamma (t)\right) ^{-\frac{\theta }{1-2\theta }%
}\right),
\end{equation}
and
\begin{equation}
f(x(t))-f^{\ast}=O\left( \left( \Gamma (t)\right) ^{-\frac{1 }{1-2\theta }}\right).
\end{equation}
\end{enumerate}
\end{theorem}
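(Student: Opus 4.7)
The plan is to adapt the Lojasiewicz--Haraux--Jendoubi method, using the desingularizing function $\varphi$ as a Lyapunov weight. First I exploit the basic energy identity: along any solution of (GS), the energy $E(t) := f(x(t)) - f^{\ast}$ is non-increasing with $E'(t) = -\lambda(t)\|\nabla f(x(t))\|^{2}$, while $\|x'(t)\| = \lambda(t)\|\nabla f(x(t))\|$. As long as $x(t) \in U(x^{\ast}, r^{\ast})$, the local Lojasiewicz hypothesis (\ref{Ch1}) gives $\varphi'(E(t))\|\nabla f(x(t))\| \geq 1$, which combined with the two previous identities yields the key desingularizing inequality
\[
-\frac{d}{dt}\varphi(E(t)) = \lambda(t)\varphi'(E(t))\|\nabla f(x(t))\|^{2} \geq \lambda(t)\|\nabla f(x(t))\| = \|x'(t)\|.
\]
Integration produces the length bound $\int_{0}^{t}\|x'(s)\|\,ds \leq \varphi(E(0)) - \varphi(E(t))$.

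Next I carry out the trapping argument. Using $\varphi(0) = 0$ and the continuity of $f$ at $x^{\ast}$, I choose a radius $r_{0} \in (0, r^{\ast})$ so small that $\|x_{0} - x^{\ast}\| < r_{0}$ implies $\|x_{0} - x^{\ast}\| + \varphi(f(x_{0}) - f^{\ast}) < r^{\ast}$. Setting $T := \sup\{t \geq 0 : x([0,t]) \subset \overline{B}(x^{\ast}, r^{\ast})\}$, on $[0, T)$ either $f(x(t)) > f^{\ast}$ and the desingularizing inequality applies, or $f(x(t)) = f^{\ast}$, in which case $\nabla f(x(t)) = 0$ by global minimality and both sides of the inequality vanish. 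Consequently the length bound yields $\|x(t) - x^{\ast}\| \leq \|x_{0} - x^{\ast}\| + \varphi(E(0)) < r^{\ast}$ on $[0, T)$, which forces $T = +\infty$. Thus $x'$ is integrable on $[0, +\infty)$, so $x(t)$ converges strongly to some $x_{\infty}$; the dissipation identity $\int_{0}^{\infty}\lambda\|\nabla f(x)\|^{2}\,dt < \infty$ together with (\ref{e3}) and the continuity of $\nabla f$ forces $\nabla f(x_{\infty}) = 0$, and convexity identifies $x_{\infty}$ as a global minimizer of $f$.

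For the decay rates, substituting $\varphi(s) = s^{\theta}/(\kappa\theta)$ into the squared Lojasiewicz inequality gives $\|\nabla f(x)\|^{2} \geq \kappa^{2}E^{2-2\theta}$, hence the scalar differential inequality
\[
E'(t) \leq -\kappa^{2}\lambda(t)E(t)^{2-2\theta}.
\]
When $\theta = \tfrac{1}{2}$ this is linear and produces $E(t) = O(e^{-\kappa^{2}\Gamma(t)})$; when $\theta \in (0, \tfrac{1}{2})$, separation of variables applied to $(E^{2\theta-1})'$ yields $E(t) = O(\Gamma(t)^{-1/(1-2\theta)})$. Re-applying the length bound on the tail interval $[t, +\infty)$ gives $\|x(t) - x_{\infty}\| \leq \varphi(E(t))$, which translates into the stated position rates. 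For the borderline regime $\theta \in (\tfrac{1}{2}, 1]$ I argue by rigidity: combining (\ref{Ch1}) with the convex inequality $f(x) - f^{\ast} \leq \|\nabla f(x)\|\,\|x-x^{\ast}\|$ and the local Lipschitz bound $\|\nabla f(x)\| \leq L\|x-x^{\ast}\|$ near $x^{\ast}$ yields $\|x-x^{\ast}\|^{2\theta-1} \geq \kappa/L^{\theta}$ whenever $f(x) > f^{\ast}$ near $x^{\ast}$. Hence $f \equiv f^{\ast}$ on a sufficiently small ball around $x^{\ast}$, so for $x_{0}$ in that ball $\nabla f(x_{0}) = 0$ and uniqueness of solutions of (GS) forces $x(t) \equiv x_{0}$.

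The main delicate point I expect is closing the bootstrap in the trapping argument: the Lojasiewicz inequality is postulated only on the punctured set $U(x^{\ast}, r^{\ast})$, so one must verify that possible crossings of the level set $\{f = f^{\ast}\}$ do not invalidate the desingularizing inequality (they do not, since $\nabla f$ vanishes there), and one must choose $r_{0}$ in a way compatible both with the modulus of continuity of $f$ at $x^{\ast}$ and, in case (1), with the rigidity threshold $(\kappa/L^{\theta})^{1/(2\theta-1)}$.
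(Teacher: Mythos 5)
Your overall architecture is the paper's: the desingularizing inequality $-\frac{d}{dt}\varphi(E(t))\geq\Vert x^{\prime}(t)\Vert$, the trapping radius chosen via $\Vert x_{0}-x^{\ast}\Vert+\varphi(E(0))<r^{\ast}$, the scalar ODE integrations giving $E(t)=O(e^{-\kappa^{2}\Gamma(t)})$ for $\theta=\frac{1}{2}$ and $E(t)=O\left(\Gamma(t)^{-\frac{1}{1-2\theta}}\right)$ for $\theta<\frac{1}{2}$, and the tail bound $\Vert x(t)-x_{\infty}\Vert\leq\varphi(E(t))$ are exactly the paper's inequalities (\ref{Naa}) and (\ref{Na}) and the subsequent estimates, the only cosmetic difference being that the paper first reparametrizes time, setting $y(t)=x(\Gamma^{-1}(t))$ so that $y^{\prime}=-\nabla f(y)$, and converts back at the end via $x(t)=y(\Gamma(t))$, while you carry the factor $\lambda(t)$ through in original time. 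Your worry about crossings of the level set $\{f=f^{\ast}\}$ is also resolved more cleanly by the paper's dichotomy: since the vector field is locally Lipschitz, forward \emph{and backward} uniqueness show that if $f(x(t_{0}))=f^{\ast}$ at any instant then $x\equiv x_{0}$ for all time, so either $E>0$ everywhere or the trajectory is stationary; no crossing can occur, and the non-differentiability of $\varphi$ at $0$ never enters.

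The genuine gap is your rigidity argument in case (1). Writing $E=f(x)-f^{\ast}$, $g=\Vert\nabla f(x)\Vert$, $d=\Vert x-x^{\ast}\Vert$, the ingredients you cite are the three pointwise inequalities $g\geq\kappa E^{1-\theta}$, $E\leq gd$, and $g\leq Ld$; but these are simultaneously satisfiable with $E>0$ and $d$ arbitrarily small --- take $E\leq\min\left\{Ld^{2},\left(Ld/\kappa\right)^{\frac{1}{1-\theta}}\right\}$ and $g=\max\left\{\kappa E^{1-\theta},E/d\right\}$ --- so no lower bound $d^{2\theta-1}\geq\kappa/L^{\theta}$ can be extracted from them: the convexity bound $E\leq gd$ points the wrong way. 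Two short repairs exist. Static: since $\nabla f$ is locally Lipschitz and $x^{\ast}$ is a global minimizer, the descent lemma gives $E\geq\frac{1}{2L}g^{2}$ near $x^{\ast}$, which combined with the Lojasiewicz inequality yields $g^{2\theta-1}\geq\kappa\left(2L\right)^{\theta-1}$, a uniform positive lower bound on $g$ wherever $E>0$, contradicting $g\leq Ld\rightarrow0$ as $x\rightarrow x^{\ast}$; hence $f\equiv f^{\ast}$ on a small ball and uniqueness freezes the trajectory. Dynamic (the paper's route, needing no Lipschitz constant): in the non-trivial case $E(t)>0$ for all $t$, multiplying your desingularizing inequality by $\varphi^{\prime}(E)$ gives, for $\theta>\frac{1}{2}$,
\begin{equation*}
-\frac{d}{dt}\left[\frac{E(t)^{2\theta-1}}{\kappa^{2}(2\theta-1)}\right]\geq\lambda(t),
\end{equation*}
and integration bounds $\Gamma(t)$ by the constant $\frac{E(0)^{2\theta-1}}{\kappa^{2}(2\theta-1)}$, contradicting (\ref{e3}); therefore $E$ must vanish at some finite time, whence $f(x_{0})=f^{\ast}$ and $x\equiv x_{0}$ by Cauchy--Lipschitz uniqueness. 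As written, your case (1) does not close; everything else is sound and matches the paper.
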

\begin{remark}
In the case where $\theta\in]0,\frac{1}{2}]$ and $\lambda(t)=\frac{K}{(1+t)^{\alpha}}$ for some constants $K>0$ and $0<\alpha<1$, Corollary \ref{Co1} and the Theorem \ref{Th4} give the same estimates on the decay rates of the trajectories of (CGP) and (GS). This let us ask wether the precise Theorem \ref{Th4} holds true for the general system (CGP).
\end{remark}
\begin{remark}
Recently, Frankel, Garrigos, and Peypouquet in \cite[Theorem 3.4]{FGP} have proved an analogues result to Theorem \ref{Th4} for the discrete gradient projection algorithm (DGP). This let us hope that Theorem \ref{Th4} can be extended to cover the continuous system (CGP).
\end{remark}
The rest of the paper is organized as follows: In the next section we prove Theorem \ref{Th1} on the weak convergence of (CGP). The third section is devoted to the proofs of Theorem \ref{Th2} and Theorem \ref{Th3} on the strong convergence of the trajectories of (CGP) under some geometrical hypothesis on $Q$ and the objective function$f$. In the fourth section, we provide short proofs of Proposition \ref{Pr1} and Corollary \ref{Co1} and a detailed proof of the main Theorem \ref{Th3}. The last section is devoted to the proof of Theorem \ref{Th4} on the convergence properties of the particular gradient system (GS) under the local Lojasiewicz inequality (\ref{Ch1}).
\section{General weak convergence result of (CGP)}
This section is devoted to the proof of Theorem \ref{Th1}. The proof is essentially based on two
elementary results.
The first one is  a version of \cite[Lemma 2.2]{Al}).
\begin{lemma}
\label{Le1}
Let $u:[0,+\infty \lbrack \rightarrow \mathbb{R}$ be an absolutely
continuous function bounded from below. If $\left[ u^{\prime }\right] ^{+}:=\max{(u^{\prime },0)}$
the positive part of the derivative $u^{\prime },$ belongs to $%
L^{1}([0,+\infty \lbrack ;\mathbb{R})$ then $%
u^{\prime}\in L^{1}([0,+\infty \lbrack ;\mathbb{R})$ and $u(t)$ converges as $%
t\rightarrow +\infty .$
\end{lemma}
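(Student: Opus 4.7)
The plan is to exploit the decomposition of the derivative into positive and negative parts and combine the $L^1$ bound on the positive part with the lower bound on $u$ to control the negative part.

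First, I would write $u' = [u']^+ - [u']^-$ where $[u']^-(t) := \max(-u'(t), 0)$, and introduce the two non-decreasing functions
\[
U(t) := \int_0^t [u'(s)]^+\, ds, \qquad V(t) := \int_0^t [u'(s)]^-\, ds.
\]
Since $u$ is absolutely continuous on every compact subinterval of $[0,+\infty[$, for each $t \geq 0$ we have
\[
u(t) = u(0) + \int_0^t u'(s)\, ds = u(0) + U(t) - V(t).
\]

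Next I would verify that each piece has a finite limit. By hypothesis $[u']^+ \in L^1([0,+\infty[)$, so $U(t)$ is non-decreasing and bounded above; hence $U(t)$ tends to a finite limit as $t \to +\infty$. Rearranging the identity above gives $V(t) = u(0) + U(t) - u(t)$. Using the hypothesis that $u$ is bounded from below, say $u(t) \geq m$ for all $t \geq 0$, we deduce that $V(t) \leq u(0) + \|[u']^+\|_{L^1} - m$, so the non-decreasing function $V$ is also bounded above and therefore has a finite limit. This proves at once that $[u']^- \in L^1([0,+\infty[)$, hence $u' \in L^1([0,+\infty[)$, and that $u(t) = u(0) + U(t) - V(t)$ admits a finite limit as $t \to +\infty$.

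There is no real obstacle: the argument is a direct consequence of the monotone convergence principle applied to the two monotone parts of $\int u'$, with the lower bound on $u$ converting the hypothesis on $[u']^+$ into an analogous bound on $[u']^-$. The only minor point to keep in mind is that absolute continuity of $u$ on $[0,+\infty[$ (understood as absolute continuity on every compact subinterval) suffices to legitimize the fundamental theorem of calculus used in the first display above.
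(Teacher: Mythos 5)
Your proposal is correct and follows essentially the same route as the paper's proof: both decompose $u^{\prime}=\left[ u^{\prime }\right]^{+}-\left[ u^{\prime }\right]^{-}$ and use the lower bound on $u$ together with the $L^{1}$ bound on $\left[ u^{\prime }\right]^{+}$ to bound $\int_{0}^{T}\left[ u^{\prime }\right]^{-}(t)\,dt$, concluding that $u^{\prime}\in L^{1}([0,+\infty \lbrack ;\mathbb{R})$ and hence that $u(t)$ converges. Your write-up merely makes explicit the monotone-limit bookkeeping via $U$ and $V$ and the locally-absolutely-continuous justification of the fundamental theorem of calculus, which the paper leaves implicit.
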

\begin{proof}
Let $\left[ u^{\prime }\right] ^{-}:=\max{(-u^{\prime },0)}$. Since $u^{\prime }=\left[ u^{\prime }\right]^{+}-\left[ u^{\prime }\right]^{-}$, then for every $T>0$
\[\int_0^T\left[ u^{\prime }\right]^{-}(t)dt\leq\int_0^{+\infty}\left[ u^{\prime }\right]^{+}(t)dt+u(0)-\inf_{t\geq0}u(t).
\]%
Therefore $\left[ u^{\prime }\right] ^{-}$, and by consequence $u^{\prime}$, belong to the space $L^{1}([0,+\infty \lbrack ;\mathbb{R})$, which implies that $u(t)$ converges as $%
t\rightarrow +\infty .$
\end{proof}
The second key result is the continuous version of the classical Opial's
lemma \cite{Op} (see \cite{AGR} for a simple and clear proof)
\begin{lemma}[Opial's lemma]Let $x:[t_{0},+\infty)\rightarrow\mathcal{H}.$ Assume
that there exists a non-empty subset $S$ of $\mathcal{H}$ such that:
\begin{enumerate}
\item[i)] if $t_{n}\rightarrow+\infty$ and $x(t_{n})\rightharpoonup x$ weakly
in $\mathcal{H}$ , then $x\in S$,
\item[ii)] for every $z\in S,$ $\displaystyle\lim_{t\rightarrow+\infty}\left\Vert
x(t)-z\right\Vert $ exists.
\end{enumerate}
\noindent Then there exists $z_{\infty}\in S$ such that $x(t)\rightharpoonup
z_{\infty}$ weakly in $\mathcal{H}$ as $t\rightarrow+\infty.$
\end{lemma}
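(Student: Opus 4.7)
The plan is to follow the classical two-step strategy: (a) establish boundedness of the trajectory so that weak subsequential limits exist, and (b) show that all such weak subsequential limits must coincide, forcing the whole trajectory to converge weakly.

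First, I would pick any $z \in S$ (possible because $S$ is non-empty) and use hypothesis (ii) to conclude that $\|x(t) - z\|$ has a finite limit as $t \to +\infty$. In particular, $x$ is bounded on $[t_0,+\infty)$. Since $\mathcal{H}$ is a Hilbert space, bounded sets are weakly relatively sequentially compact, so for every sequence $t_n \to +\infty$ there is a subsequence $t_{n_k}$ along which $x(t_{n_k})$ converges weakly to some limit, and by hypothesis (i) that limit necessarily belongs to $S$.

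The core of the argument is then uniqueness of the weak cluster point. Suppose $z_1, z_2$ are two weak subsequential limits, reached along $s_n \to +\infty$ and $t_n \to +\infty$ respectively; both are in $S$ by (i). Using (ii), set $\ell_i := \lim_{t\to\infty}\|x(t) - z_i\|^2$ for $i = 1,2$. Expanding the squared norms and subtracting yields
\begin{equation*}
\|x(t)-z_1\|^2 - \|x(t)-z_2\|^2 = -2\langle x(t),\, z_1 - z_2\rangle + \|z_1\|^2 - \|z_2\|^2,
\end{equation*}
so the real-valued function $t \mapsto \langle x(t),\, z_1 - z_2 \rangle$ has a limit as $t \to +\infty$. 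Passing to the limit along $s_n$ gives the value $\langle z_1,\, z_1 - z_2 \rangle$, while passing along $t_n$ gives $\langle z_2,\, z_1 - z_2 \rangle$. Equating these forces $\|z_1 - z_2\|^2 = 0$, hence $z_1 = z_2$.

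Finally, I would convert this uniqueness into convergence of the entire trajectory: if $x(t)$ did not converge weakly to the common value $z_\infty$, there would exist $\varphi \in \mathcal{H}$, a sequence $\tau_n \to +\infty$, and $\varepsilon > 0$ with $|\langle x(\tau_n) - z_\infty,\,\varphi\rangle| \geq \varepsilon$; boundedness of $x(\tau_n)$ would then yield a weakly convergent subsequence whose limit both lies in $S$ (by (i)) and differs from $z_\infty$, contradicting the uniqueness just established. The only nontrivial step is the uniqueness argument, and even there the only genuine subtlety is recognising that the difference of two convergent scalar functions is itself convergent, allowing the inner product $\langle x(t), z_1 - z_2\rangle$ to be evaluated at two different weakly convergent subsequences; the rest is routine Hilbert-space manipulation.
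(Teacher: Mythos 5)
Your proof is correct and complete: boundedness from (ii), weak sequential compactness plus (i) to place every cluster point in $S$, uniqueness of the cluster point via the convergence of $t\mapsto\langle x(t),z_{1}-z_{2}\rangle$ evaluated along the two subsequences, and the standard contradiction argument to upgrade to convergence of the whole trajectory. The paper itself gives no proof of this lemma---it states it as the continuous version of Opial's lemma and defers to \cite{AGR}---and your argument is precisely the classical one found there, so there is nothing in the paper your proof deviates from.
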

Now we are ready to prove Theorem \ref{Th1}.
\begin{proof}
Let $x_{0}\in Q.$ Since $P_{Q}$ is non-expansive mapping and $\nabla f$
is locally Lipschitz, then according to the Cauchy-Lipschitz theorem, the system
(CGP) has a unique maximal solution $x\in C^{1}([0,T^{\ast }[;\mathcal{H}).$
By proceeding exactly as in the beginning of the proof of \cite[Theorem 2.1]{Bo}, we deduce that $x(t)\in Q$ for every $t\in \lbrack 0,T^{\ast }[.$ On
the other hand, from the characterization of the projection operator $P_{Q},$
\begin{equation}
\langle x^{\prime }(t)+x(t)-w,x^{\prime }(t)+\lambda (t)\nabla
f(x(t))\rangle \leq 0~\forall w\in Q.  \label{S1}
\end{equation}%
Hence by letting $w=x(t),$ we get%
\begin{equation}
\left\Vert x^{\prime }(t)\right\Vert ^{2}+\lambda (t)\left(
f(x(t))-f_{Q}^{\ast }\right) ^{\prime }\leq 0,  \label{S2}
\end{equation}%
which implies, in particular, that the non negative function $%
f(x(t))-f_{Q}^{\ast }$ is non increasing on the interval $[0,T^{\ast }[.$
Therefore from (\ref{S2}), we infer that
\begin{equation}
\left\Vert x^{\prime }(t)\right\Vert ^{2}+\left( \lambda (t)\left(
f(x(t))-f_{Q}^{\ast }\right) \right) ^{\prime }\leq |\lambda ^{\prime }%
(t)|\left( f(x_{0})-f_{Q}^{\ast }\right) .  \label{S3}
\end{equation}%
Integrating this inequality on $[0,T^{\ast }[,$ we obtain
\begin{eqnarray*}
\int_{0}^{T^{\ast }}\left\Vert x^{\prime }(t)\right\Vert ^{2} &\leq &\left(
f(x_{0})-f_{Q}^{\ast }\right) \left( \lambda (0)+\int_{0}^{T^{\ast} }
|\lambda ^{\prime }(t)|dt\right)
\end{eqnarray*}%
From the last inequality, we deduce by a standard argument that $T^{\ast
}=+\infty .$ Indeed, let us argue by contradiction and suppose that $T^{\ast
}<+\infty .$ Then the Cauchy-Schawrz inequality combined with previous
inequality yields that  $\int_{0}^{T^{\ast }}\left\Vert x^{\prime
}(t)\right\Vert dt<+\infty $ which implies that $\lim_{t\rightarrow T^{\ast
}}x(t)=\tilde{x}$ exists. Then by applying again the Cauchy Lipschitz
theorem to the system%
\[
\left\{
\begin{array}{c}
x^{\prime }(t)+x(t)=P_{Q}(x(t)-\lambda (t)\nabla f(x(t))),~t\geq T^{\ast }
\\
x(T^{\ast })=\tilde{x},%
\end{array}%
\right.
\]%
we deduce that we can extend the solution $x(.)$ of (CGP) on an interval
strictly larger than $[0,T^{\ast }[$, which contradicts the definition of $%
T^{\ast }.$ Therefore $T^{\ast }=+\infty .$
Let us now apply Opial's lemma to prove the weak convergence of the trajectory $x(t)$ as $t\rightarrow +\infty$ under the additionally assumptions (\ref{e3}) and (\ref{e4}) on the function $\lambda$. Let $z$ be an arbitrary element of $\arg \min_{Q}f$ and define the function $%
\varphi _{z}$ on $[0,+\infty \lbrack $ by
\begin{equation}
\varphi _{z}(t)=\frac{1}{2}\left\Vert x(t)-z\right\Vert ^{2}.  \label{S4}
\end{equation}%
Going back to (\ref{S1}) and let $w=z,$ we get after some trivial
simplification
\begin{equation}
\varphi _{z}^{\prime }(t)+\left(
\lambda (t)\left( f(x(t))-f_{Q}^{\ast }\right) \right) ^{\prime }+\lambda
(t)\langle \nabla f(x(t)),x(t)-z\rangle \leq \ |\lambda ^{\prime }(t)|%
\left( f(x_{0})-f_{Q}^{\ast }\right). \label{HH}
\end{equation}%
Using now the convexity inequality%
\[
f(z)\geq f(x(t))+\langle \nabla f(x(t)),z-x(t)\rangle
\]%
and the fact that $f(z)=f_{Q}^{\ast },$ we get the differential inequality%
\begin{equation}
\xi _{z}^{\prime }(t)+\lambda (t)\left( f(x(t))-f_{Q}^{\ast }\right) \leq
|\lambda ^{\prime }(t)|\left( f(x_{0})-f_{Q}^{\ast }\right)
,  \label{S5}
\end{equation}%
where%
\[
\xi _{z}(t):=\varphi _{z}(t)+\lambda (t)\left( f(x(t))-f_{Q}^{\ast }\right) .
\]%
From (\ref{S5}), $\left[ \xi _{z}^{\prime }\right]^+ \in L^{1}([0,+\infty
\lbrack ;\mathbb{R}),$ therefore according to Lemma \ref{Le1}, $\xi _{z}(t)$
converges as $t\rightarrow +\infty .$ Again, from Lemma \ref{Le1} and the
inequality (\ref{S3}), we deduce that $\ \lambda (t)\left(
f(x(t))-f_{Q}^{\ast }\right) $ converges as $t\rightarrow +\infty .$ Hence, $%
\lim_{t\rightarrow +\infty }\varphi _{z}(t)$ exists. On the other hand, by
integrating (\ref{S5}) on $[0,T]$ and letting $T$ goes to $+\infty ,$ we
infer that%
\begin{equation}
\int_{0}^{+\infty }\lambda (t)\left( f(x(t))-f_{Q}^{\ast }\right) dt<+\infty
.  \label{S6}
\end{equation}%
But the non negative function $f(x(t))-f_{Q}^{\ast }$ is decreasing, hence
it converges as $t$ goes to $+\infty $ to some real $l_{\infty }\geq 0$
which, according to (\ref{S6}) and the assumption $\int_{0}^{+\infty
}\lambda (t)dt=+\infty ,$ must be equal to $0.$ Therefore, from the facts
that $\{x(t):t\geq 0\}\subset Q,$ the set $Q$ is weakly closed (since it is
convex and closed), and the weak lower semi-continuity of the convex
function $f,$ we deduce that if $t_{n}\rightarrow +\infty $ and $x(t_{n})$
converges weakly to some $x^{\ast }$, then $x^{\ast }$ belongs to $Q$ and
satisfies $f(x^{\ast })\leq f_{Q}^{\ast },$ which means that $x^{\ast }$ is
in $\arg \min_{Q}f.$ Thus, from Opial's lemma, $x(t)$ converges weakly as $%
t\rightarrow +\infty $ to some minimizer of $f$ over $Q.$ To end the proof
of Theorem \ref{Th1}, it remains to prove (\ref{e5}). Let $\varepsilon >0.$ From (\ref{S6}),
there exists $t_{0}>0$ such that for every $t\geq t_{0},$%
\[
\int_{t_{0}}^{t}\lambda (s)\left( f(x(s))-f_{Q}^{\ast }\right) ds\leq
\varepsilon .
\]%
Using now the fact that the function $f(x(t))-f_{Q}^{\ast }$ is decreasing,
we get
\[
\Gamma (t)\left( f(x(t))-f_{Q}^{\ast }\right) \leq \varepsilon +\Gamma
(t_{0})\left( f(x(t))-f_{Q}^{\ast }\right) .
\]%
Letting $t\rightarrow +\infty $ and using the fact that $\lim_{t\rightarrow
0}f(x(t))=f_{Q}^{\ast },$ we obtain
\[
\lim \sup_{t\rightarrow +\infty }\Gamma (t)\left( f(x(t))-f_{Q}^{\ast
}\right) \leq \varepsilon ,
\]%
which implies the required estimate (\ref{e5}).
\end{proof}
\section{Strong convergence of the system (CGP) under some geometrical
properties of the objective function}
In this section we prove Theorem\ref{Th2}. We will use some results established in
the proof of Theorem \ref{Th1}.
\begin{proof}
Let $x_{0}\in Q$ and $x(.)$ be the global solution of the system (CGP). Let
us prove the strong convergence of the trajectory $x(.)$ under the first assumption of Theorem \ref{Th2}. The proof is inspired by the proof of \cite[Theorem 5]{Br2}. Let $%
0<t_{1}<t_{2},$ and define on $[t_{1},t_{2}]$ the function $g(s):=\left\Vert
x(s)\right\Vert ^{2}-\left\Vert x(t_{2})\right\Vert ^{2}-\frac{1}{2}%
\left\Vert x(s)-x(t_{2})\right\Vert ^{2}.$ It is clear that%
\[
g^{\prime }(s)=\langle x^{\prime }(s),x(s)+x(t_{2})\rangle .
\]%
From Theorem \ref{Th1} and the the symmetry of $Q$ with respect of $0$, we have $-x(t_{2})\in Q.$ Therefore, by letting $w=-x(t_{2})$ in the inequality
(\ref{S1}), we get
\[
\left\Vert x^{\prime }(s)\right\Vert ^{2}+g^{\prime }(s)+\lambda (s)\left(
f(x(s))-f_{Q}^{\ast }\right) ^{\prime }+\lambda (s)\langle \nabla
f(x(s),x(s)+x(t_{2})\rangle \leq 0.
\]%
Hence by using the fact that $f$ is even, the convex inequality%
\[
f(-x(t_{2}))\geq f(x(s))+\langle \nabla f(x(s),-x(t_{2})-x(s)\rangle
\]%
and the fact that the function $f(x(t))-f_{Q}^{\ast }$ is decreasing on $%
[0,+\infty \lbrack $ (see the proof of Theorem \ref{Th1}), we get for every $s$ in $[t_{1},t_{2}]$
\begin{eqnarray*}
g^{\prime }(s)+\left( \lambda (s)\left( f(x(s))-f_{Q}^{\ast }\right) \right)
^{\prime } &\leq &\lambda (s)\left( f(x(t_{2})-f(x(s)\right) +|\lambda
^{\prime }(s)|\left( f(x(s)-f_{Q}^{\ast }\right)  \\
&\leq &|\lambda ^{\prime }(s)|\left( f(x_{0})-f_{Q}^{\ast
}\right) .
\end{eqnarray*}%
Integrating this inequality on $[t_{1},t_{2}],$ we obtain, after some simple
simplifications,%
\begin{equation}
\frac{1}{2}\left\Vert x(t_{1})-x(t_{2})\right\Vert ^{2}\leq \lambda
(t_{1})(f(x(t_{1}))-f_{Q}^{\ast}) + M_{0}\int_{t_{1}}^{t_{2}}|\lambda ^{\prime }(s)|ds+\left\Vert x(t_{1})\right\Vert ^{2}-\left\Vert
x(t_{2})\right\Vert ^{2},  \label{d}
\end{equation}%
where $M_{0}:=\left( f(x_{0})-f_{Q}^{\ast }\right) .$ Recall that in the
proof of Theorem \ref{Th1}, we have established that $%
\lim_{t\rightarrow +\infty }\lambda (t)\left( f(x(t))-f_{Q}^{\ast }\right)=0 $ (indeed we have proved that $\lim_{t\rightarrow +\infty }f(x(t))-f_{Q}^{\ast }=0$ and we know that the function $\lambda$ is bounded since its derivative belongs to $L^1([0,+\infty[)$
and that for every $z$ in $\arg \min_{Q}f,$ $\lim_{t\rightarrow
+}\left\Vert x(t)-z\right\Vert ^{2}$ exists. But from the hypothesis on $Q$
and $f,$ we have $0\in \arg \min_{Q}f$, then $\lim_{t\rightarrow
+}\left\Vert x(t)\right\Vert ^{2}$ exists. Hence  by letting $t_{1}$ and $t_{2}$ go $%
+\infty $ in the inequality (\ref{d}), we conclude that
\[
\left\Vert x(t_{1})-x(t_{2})\right\Vert \rightarrow 0\text{ as }%
t_{1},t_{2}\rightarrow +\infty ,
\]%
which implies that $x(t)$ converges strongly to some $x^{\ast}$ in $\mathcal{H}$ as $%
t\rightarrow +\infty .$ From Theorem \ref{Th1}, $x^{\ast}$ belongs to $\arg \min_{Q}f$.
Let us now prove the strong convergence of the trajectory $x(.)$ under the
second assumption of Theorem \ref{Th2}. The proof is
inspired by \cite[Theorem 3.13]{Bri}. By assumption, there $x_{0}^{\ast}$ in $\arg \min_{Q}f$ and $%
r^{\ast }>0$ such that for every $x$ in the ball $B_{\mathcal{H}}(x_{0}^{\ast},r^{\ast }),$ $f(x)=f_{Q}^{\ast }$ which implies that $\nabla f(x)=0.$
Hence, from the positivity of the operator $\nabla f,$ for every $y\in
\mathcal{H}$ and $v\in B_{\mathcal{H}}(0,1)$ we have
\[
\langle \nabla f(y),y-x_{0}^{\ast}-r^{\ast }v\rangle \geq 0,
\]%
which implies that
\begin{equation}
\left\Vert \nabla f(y)\right\Vert =\sup_{\left\Vert v\right\Vert <1}\langle
\nabla f(y),v\rangle \leq \frac{1}{r^{\ast }}\langle \nabla f(y),y-x^{\ast
}\rangle .  \label{d2}
\end{equation}%
Using now the facts that $x$ is the solution on (CGP), $x(t)\in Q$ for every
$t\geq 0,$ and the fact $P_{Q}$ is a non-expansive mapping, we deduce from (\ref{d2}) that
\begin{eqnarray*}
\left\Vert x^{\prime }(t)\right\Vert &=& \left\Vert P_Q(x(t)-\lambda(t)\nabla f(x(t)))-x(t)\right\Vert \\
&\leq &\lambda (t)\left\Vert \nabla
f(x(t))\right\Vert  \\
&\leq &\frac{1}{r^{\ast }}\lambda (t)\langle \nabla f(x(t)),x(t)-x_{0}^{\ast}\rangle .
\end{eqnarray*}%
Hence, from the inequality (\ref{HH}) with $z=x_{0}^{\ast},$ we deduce
\[
\left\Vert x^{\prime }(t)\right\Vert \leq \frac{1}{r^{\ast }}\left( |
\lambda ^{\prime }(t)|\left( f(x_{0})-f_{Q}^{\ast }\right)
-\varphi _{x_{0}^{\ast}}^{\prime }(t)-\left( \lambda (t)\left(
f(x(t))-f_{Q}^{\ast }\right) \right) ^{\prime }\right) .
\]%
Integrating this inequality, we infer that for every $t\geq 0$%
\[
\int_{0}^{t}\left\Vert x^{\prime }(s)\right\Vert ds\leq \frac{1}{r^{\ast }}%
\left( \left( f(x_{0})-f_{Q}^{\ast }\right) \int_{0}^{+\infty }|
\lambda ^{\prime }(s)|ds+\varphi _{x_{0}^{\ast}}(0)+\left( \lambda
(0)\left( f(x_{0})-f_{Q}^{\ast }\right) \right) \right) ,
\]%
which implies that $x^{\prime }$ belongs to the space $L^{1}([0,+\infty
\lbrack ;\mathcal{H}).$ Thus $x(t)$ converges strongly in $\mathcal{H}$ as $%
t\rightarrow +\infty .$ This completes the proof of Theorem \ref{Th2}.
\end{proof}
\section{Strong convergence of the system (CGP) under the (GHEB) hypothesis}
In this section, we prove Proposition \ref{Pr1}, Theorem \ref{Th3} and Corollary \ref{Co1}.
\par Let us first give the proof of Proposition \ref{Pr1}.
\begin{proof}
Since $g$ is continuous and strongly convex and $Q$ is a non-empty closed
convex subset of $\mathcal{H},$ the function $g$ has a unique minimizer $%
x_{Q}^{\ast }$ over $Q$. Then $\arg \min h=\{x_{Q}^{\ast }\}$ and $%
h_{Q}^{\ast }=\left( g(x_{Q}^{\ast })\right) ^{\frac{1}{2\theta }}.$
Moreover, since $g$ is $C^{1}$ and strongly convex, there exists a constant $%
m>0$ such that for every $x\in Q$%
\begin{eqnarray*}
g(x) &\geq &g(x_{Q}^{\ast })+\langle \nabla g(x_{Q}^{\ast }),x-x_{Q}^{\ast
}\rangle +\frac{m}{2}\left\Vert x-x_{Q}^{\ast }\right\Vert ^{2} \\
&\geq &g(x_{Q}^{\ast })+\frac{m}{2}\left\Vert x-x_{Q}^{\ast }\right\Vert
^{2}.
\end{eqnarray*}%
Using now the fact that for every $r\geq 1,a\geq 0,$ and $b\geq 0$ we have
\[
(a+b)^{r}\geq a^{r}+C_{r}b^{r}
\]%
with $C_{r}:=\inf_{t>0}\frac{(1+t)^{r}-1}{t^{r}}>0,$ we conclude that for
every $x\in Q$%
\[
h(x)\geq h_{Q}^{\ast }+C_{\frac{1}{2\theta }}\left( \frac{m}{2}\right) ^{%
\frac{1}{2\theta }}\left\Vert x-x_{Q}^{\ast }\right\Vert ^{\frac{1}{\theta }%
},
\]%
which completes the proof.
\end{proof}
Let us now prove the main result Theorem \ref{Th3}.
\begin{proof}
Firstly, by combining (\ref{S2}) with the assumption $\lambda ^{\prime }\leq 0,$ we
get for almost every $t\geq 0$
\begin{equation}
\left\Vert x^{\prime }(t)\right\Vert ^{2}+\left( \lambda (t)\left(
f(x(t)-f_{Q}^{\ast }\right) \right) ^{\prime }\leq 0.  \label{Q2}
\end{equation}%
Let us introduce the function
\begin{align*}
\varphi (t)& :=\frac{1}{2}\left( dist(x(t),\arg \min_{Q}f)\right) ^{2} \\
& =\frac{1}{2}\left\Vert x(t)-\left[ x(t)\right] \right\Vert ^{2},
\end{align*}%
where $\left[ x(t)\right] =P_{\arg \min_{Q}f}(x(t))$ is the projection of $%
x(t)$ onto the convex and closed subset $\arg \min_{Q}f$ . It is well-known
that the function $\varphi $ is of class $C^{1}$ and satisfies
\[
\varphi ^{\prime }(t)=\langle x^{\prime }(t),x(t)-\left[ x(t)\right] \rangle
.
\]%
Hence by letting $w=\left[ x(t)\right] $ in (\ref{S1}), we obtain
\[
\varphi ^{\prime }(t)+\left\Vert x^{\prime }(t)\right\Vert ^{2}+\lambda
(t)\left( f(x(t)-f_{Q}^{\ast }\right) ^{\prime }+\lambda (t)\langle \nabla
f(x(t)),x(t)-\left[ x(t)\right] \rangle \leq 0.
\]%
Using now the assumption $\lambda ^{\prime }\leq 0$ and the convex inequality%
\begin{align*}
\langle \nabla f(x(t)),x(t)-\left[ x(t)\right] \rangle & \geq f(x(t))-f(%
\left[ x(t)\right] ) \\
& =f(x(t))-f_{Q}^{\ast },
\end{align*}%
we infer that for almost every $t\geq 0$%
\begin{equation}
\varphi ^{\prime }(t)+\left( \lambda (t)\left( f(x(t)-f_{Q}^{\ast }\right)
\right) ^{\prime }+\lambda (t)\left( f(x(t))-f_{Q}^{\ast }\right) \leq 0.
\label{Q3}
\end{equation}%
On the other hand, since the functions $f(x(t))-f_{Q}^{\ast }$ and $\lambda
(t)$ are bounded on $[0,+\infty \lbrack $ and $\theta \leq \frac{1}{2},$
there exists some constant $c_{1}>0$ independent of $t$ such that
\[
\left( f(x(t))-f_{Q}^{\ast }\right) \geq c_{1}\left( \lambda (t)\left(
f(x(t)-f_{Q}^{\ast }\right) \right) ^{\frac{1}{2\theta }}.
\]%
Combining this inequality with the fact that $f$ satisfies the (GHEB)
hypothesis (\ref{e7}), we infer that
\begin{align}
\lambda (t)\left( f(x(t))-f_{Q}^{\ast }\right) & =\frac{1}{2}\lambda
(t)\left( f(x(t))-f_{Q}^{\ast }\right) +\frac{1}{4}\lambda (t)\left(
f(x(t))-f_{Q}^{\ast }\right) +\frac{1}{4}\lambda (t)\left(
f(x(t))-f_{Q}^{\ast }\right)   \nonumber \\
& \geq \frac{1}{2}\lambda (t)\left( f(x(t))-f_{Q}^{\ast }\right) +\frac{%
\kappa ^{\frac{1}{\theta }}}{4}\lambda (t)\left( \varphi (t)\right) ^{\frac{1%
}{2\theta }}+\frac{c_{1}}{4}\lambda (t)\left( \alpha (t)\left(
f(x(t)-f_{Q}^{\ast }\right) \right) ^{\frac{1}{2\theta }}  \nonumber \\
& \geq \frac{1}{2}\lambda (t)\left( f(x(t))-f_{Q}^{\ast }\right)
+c_{2}\lambda (t)\left( \varphi (t)+\lambda (t)\left( f(x(t)-f_{Q}^{\ast
}\right) \right) ^{\frac{1}{2\theta }},  \nonumber \\
& =\frac{1}{2}\lambda (t)\left( f(x(t))-f_{Q}^{\ast }\right) +c_{2}\lambda
(t)(\psi (t))^{\frac{1}{2\theta }},  \label{Q4}
\end{align}%
where $c_{2}>0$ is absolute constant and%
\[
\psi (t):=\varphi (t)+\lambda (t)\left( f(x(t)-f_{Q}^{\ast }\right) .
\]%
Inserting (\ref{Q4}) in the inequality (\ref{Q3}), we obtain for almost
every $t\geq 0$
\begin{equation}
\psi ^{\prime }(t)+c_{2}\lambda (t)\left( \psi (t)\right) ^{\frac{1}{2\theta
}}+\frac{1}{2}\lambda (t)\left( f(x(t)-f_{Q}^{\ast }\right) \leq 0.
\label{Q5}
\end{equation}%
In particular, we have the differential inequality%
\[
\psi ^{\prime }(t)+c_{2}\lambda (t)\left( \psi (t)\right) ^{\frac{1}{2\theta
}}\leq 0,
\]%
which implies in particular that the function $\psi $ is non increasing.
Since $\psi $ is non negative, then if there exists $t_{0}\geq 0$ such that $%
\psi (t_{0})=0$ then $\psi (t)=0$ for every $t>t_{0}$. Hence, in order to
estimate the growth of $\psi (t)$ for $t$ large enough, we can assume that $%
\psi (t)>0$ for all $t>0$. Therefore, by dividing the previous
differential inequality by $\psi ^{\frac{1}{2\theta }}(t)$, integrating the
resulting inequality and using the fact that $\Gamma (t)\rightarrow +\infty $
as $t\rightarrow +\infty ,$ we obtain
\begin{align}
\psi (t)& =O(e^{-c_{2}\Gamma (t)})\text{ if }\theta =\frac{1}{2},  \label{Q6}
\\
\psi (t)& =O(\left( \Gamma (t)\right) ^{-\frac{2\theta }{1-2\theta }})\text{
if }\theta \in ]0,\frac{1}{2}[.  \label{Q7}
\end{align}%
In particular, $\psi (t)\rightarrow 0$ as $t\rightarrow +\infty ;$ hence, by
integrating (\ref{Q5}) on the interval $[t,+\infty \lbrack $, we get%
\[
\int_{t}^{+\infty }\lambda (s)\left( f(x(s)-f_{Q}^{\ast }\right) ds\leq
2\psi (t).
\]%
Using now the fact that the function $\lambda (t)\left( f(x(t)-f_{Q}^{\ast
}\right) $ is decreasing, which is a consequence of (\ref{Q2}), we deduce
that for every $t>0$
\begin{align}
\frac{t}{2}\lambda (t)\left( f(x(t))-f_{Q}^{\ast }\right) & \leq \int_{\frac{%
t}{2}}^{t}\lambda (s)\left( f(x(s)-f_{Q}^{\ast }\right) ds  \nonumber \\
& \leq 2\psi (\frac{t}{2}).  \label{RR1}
\end{align}%
Combining (\ref{Q6}), (\ref{Q7}) and (\ref{RR1}) with the inequality
\begin{equation}
\Gamma (t)\leq 2\Gamma (\frac{t}{2}),~\forall t\geq 0,  \label{Ess}
\end{equation}%
which is a consequence of the fact that the function $\lambda $ is
deceasing, we deduce that%
\begin{align}
f(x(t))-f_{Q}^{\ast }& =O(\frac{1}{t\lambda (t)}e^{-M\Gamma (t)})\text{
if }\theta =\frac{1}{2},  \label{Q8} \\
f(x(t))-f_{Q}^{\ast }& =O(\frac{1}{t\lambda (t)}\left( \Gamma (t)\right) ^{-%
\frac{2\theta }{1-2\theta }})\text{ if }\theta \in ]0,\frac{1}{2}[,
\label{Q9}
\end{align}%
where $M>0$ is an absolute constant. This completes the proof of (\ref{H4}) and
(\ref{N2}). Let us now focus our attention on the study of the strong convergence of
$x(t)$ as $t\rightarrow +\infty .$ Integrating (\ref{Q2}) between $t$ and $2t
$ and using the Cauchy-Schawrz inequality, we obtain%
\[
\int_{t}^{2t}\left\Vert x^{\prime }(s)\right\Vert ds\leq \sqrt{t\lambda
(t)\left( f(x(t))-f_{Q}^{\ast }\right) }.
\]%
Dividing this inequality by $t$ and integrating the resulting differential inequality on $[\frac{\tau }{2},\infty
\lbrack $ where $\tau >0,$ we get, thanks to Fubini's theorem, the following
inequality%
\[
\ln 2~\int_{\tau }^{+\infty }\left\Vert x^{\prime }(s)\right\Vert ds\leq
\int_{\frac{\tau }{2}}^{+\infty }\sqrt{\frac{\lambda (s)}{s}\left(
f(x(s))-f_{Q}^{\ast }\right) }ds.
\]%
Using now the estimates (\ref{Q8}) and (\ref{Q9}), we deduce that
\begin{align}
\int_{\tau }^{+\infty }\left\Vert x^{\prime }(s)\right\Vert ds& =O\left(
\int_{\frac{\tau }{2}}^{+\infty }\frac{e^{-\frac{M}{2}\Gamma (s)}}{s}%
ds\right) \text{ }  \nonumber \\
& =O\left( e^{-\frac{M}{4}\Gamma (\frac{\tau }{2})}\right) \text{ if }%
\theta =\frac{1}{2},  \label{C1} \\
\int_{\tau }^{+\infty }\left\Vert x^{\prime }(s)\right\Vert ds& =O\left(
\int_{\frac{\tau }{2}}^{+\infty }\frac{\left( \Gamma (s)\right) ^{-\frac{%
\theta }{1-2\theta }}}{s}ds\right) \text{ if }\theta \in ]0,\frac{1}{2}[.
\label{C2}
\end{align}%
Therefore $x(t)$ converges strongly as $t\rightarrow +\infty $ to some $%
x^{\ast }$ which is, from Lemma \ref{Le1} and Theorem \ref{Th1}, an element of $\arg \min_{Q}f.$
Finally, the estimates (\ref{H3})and(\ref{N1}) on the decay rate of $x(t)$ can be easily
deduced from (\ref{C1}) and (\ref{C2}) by using a simple change of variable,
the estimate (\ref{Ess}) and the fact that
\[
\left\Vert x(\tau )-x^{\ast }\right\Vert \leq \int_{\tau }^{+\infty
}\left\Vert x^{\prime }(s)\right\Vert ds.
\]
\end{proof}
\par The proof of Corollary \ref{Co1} is a direct application of Theorem \ref{Th3} and the following elemetary result.
\begin{lemma}
Let $u,v:[0,+\infty[\rightarrow[0,+\infty[$ be two locally integrable functions such that $u(t)\sim Ct^{\alpha}$ and$v(t)\sim Ct^{\beta}$ as $t\rightarrow +\infty$ for some constants $C>0$ and $\alpha>-1$ and$\beta<-1$. Then $\int_{0}^{t} u(s)ds\sim \frac{C}{1+\alpha} t^{1+\alpha}$ and $\int_{t}^{+\infty} v(s)ds\sim -\frac{C}{1+\beta} t^{1+\beta}$ as $t\rightarrow +\infty$.
\end{lemma}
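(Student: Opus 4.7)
The plan is to dispose of the two asymptotic estimates independently, since they share the same structure and in each case are a direct instance of L'Hopital's rule applied to an indeterminate form coming from a primitive versus a power. In both cases the hypothesis $u(t)\sim Ct^{\alpha}$ (resp.\ $v(t)\sim Ct^{\beta}$) is exactly what is needed to evaluate the limit of the ratio of derivatives, so only the verification of the hypotheses of L'Hopital's rule, and of the convergence or divergence of the two quantities compared, requires attention.

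For the first claim, I would set $F(t):=\int_{0}^{t}u(s)\,ds$. Since $\alpha>-1$ and $u(s)\sim Cs^{\alpha}$, there exists $T>0$ such that $u(s)\geq \tfrac{C}{2}s^{\alpha}$ on $[T,+\infty[$; integrating shows $F(t)\to+\infty$. The reference function $G(t):=\tfrac{C}{1+\alpha}t^{1+\alpha}$ also tends to $+\infty$ and has nonzero derivative $Ct^{\alpha}$ for $t>0$. L'Hopital's rule applied to the $\infty/\infty$ ratio $F(t)/G(t)$ gives
\[
\lim_{t\to+\infty}\frac{F(t)}{G(t)}=\lim_{t\to+\infty}\frac{u(t)}{Ct^{\alpha}}=1,
\]
which is the desired equivalence. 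For the second claim, since $\beta<-1$ and $v(s)\sim Cs^{\beta}$, for $s$ large we have $v(s)\leq 2Cs^{\beta}$ which is integrable at $+\infty$; hence $H(t):=\int_{t}^{+\infty}v(s)\,ds$ is well defined for $t$ large and tends to $0$. The reference function $K(t):=-\tfrac{C}{1+\beta}t^{1+\beta}$ also tends to $0$. L'Hopital's rule in the $0/0$ form yields
\[
\lim_{t\to+\infty}\frac{H(t)}{K(t)}=\lim_{t\to+\infty}\frac{-v(t)}{-Ct^{\beta}}=1,
\]
which is the second claim.

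If one prefers a self-contained approach, the same conclusions follow by a routine $\varepsilon$-argument: fix $\varepsilon>0$, choose $T$ so that $(1-\varepsilon)Cs^{\alpha}\leq u(s)\leq (1+\varepsilon)Cs^{\alpha}$ for $s\geq T$, split $\int_{0}^{t}=\int_{0}^{T}+\int_{T}^{t}$, and observe that the first piece is bounded while the second is squeezed between $(1\pm\varepsilon)\tfrac{C}{1+\alpha}(t^{1+\alpha}-T^{1+\alpha})$; division by $G(t)$ and letting $t\to+\infty$ then $\varepsilon\to0$ gives the first result, and an identical bracketing of $\int_{t}^{+\infty}v(s)\,ds$ gives the second. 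There is no real obstacle: the lemma is a classical fact about integration of power-asymptotic functions, and the only points requiring care are the integrability of $v$ at $+\infty$ (ensured by $\beta<-1$) and the divergence of $F$ (ensured by $\alpha>-1$), both of which are immediate from the hypotheses.
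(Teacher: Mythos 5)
The paper itself states this lemma without proof, labelling it elementary, so there is no ``paper proof'' to match against; your task was simply to supply a correct argument, and you do. One caveat on your first route: $u$ and $v$ are only assumed locally integrable, so $F(t)=\int_0^t u(s)\,ds$ and $H(t)=\int_t^{+\infty}v(s)\,ds$ are merely absolutely continuous, with $F'=u$ and $H'=-v$ holding only almost everywhere; the classical statement of L'Hopital's rule requires differentiability throughout a neighborhood of $+\infty$, so invoking it verbatim is not rigorous here (there are a.e.-differentiable versions for absolutely continuous numerator and $C^1$ monotone denominator, but you would need to cite or prove one). Your fallback $\varepsilon$-bracketing argument, however, is fully rigorous exactly as written: choosing $T$ with $(1-\varepsilon)Cs^{\alpha}\leq u(s)\leq(1+\varepsilon)Cs^{\alpha}$ on $[T,+\infty[$, the tail integral is squeezed between $(1\pm\varepsilon)\frac{C}{1+\alpha}(t^{1+\alpha}-T^{1+\alpha})$ while $\int_0^T u$ is a fixed constant (local integrability handles the possible singularity of $s^{\alpha}$ at $0$ when $-1<\alpha<0$), and dividing by $\frac{C}{1+\alpha}t^{1+\alpha}\to+\infty$ gives the first equivalence; the identical bracketing of $\int_t^{+\infty}v$, whose convergence you correctly justify from $\beta<-1$, gives the second. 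So take the $\varepsilon$-argument as the proof and the L'Hopital computation as heuristic motivation, and the lemma is completely established.
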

\section{On the strong convergence of the gradient system (GS) under a local Lojasiewicz inequality}
In this section we prove Theorem \ref{Th4}. The proof is inspired by the book of Haraux and Jendoubi \cite{HJ} and the paper of Chill and Fioranza \cite{CF}. As we will see in the proof, the hypothesis on the convexity of the objective function $f$ is not necessary in Theorem \ref{Th4} and the assumption $x_{0}^{\ast}$ is a global minimizer of $f$ can be replaced by the weaker one $x_{0}^{\ast}$ is  a minimizer of $f$ over the ball $B_{\mathcal{H}%
}(x_{0}^{\ast },r^{\ast})$. Moreover, we will notice that if $r^{\ast}=+\infty$ then $r$ can be taken equal to $+\infty$ too.
\begin{proof}
Let $r \in ]0,r ^{\ast }]$ to be chosen later and let $x_{0}\in B_{%
\mathcal{H}}(x_{0}^{\ast },r )$. Let $x\in C^{1}([0,+\infty \lbrack ,\mathcal{H})$ be
the unique global solution of the system (GS). We first notice that from the
assumption on the function $\lambda ,$ the function $\Gamma :[0,+\infty
\lbrack \rightarrow \lbrack 0,+\infty \lbrack ,$ defined by (\ref{e6}), is a
bijection of class $C^{1}.$ Let $y$ be the function defined on $[0,+\infty
\lbrack $ by $y(t)=x(\Gamma ^{-1}(t)).$ Using the chain rule, we easily
verify that $y$ belongs to the space $C^{1}([0,+\infty \lbrack ,\mathcal{H})$
and it is the unique solution of the system%
\begin{equation}
\left\{
\begin{array}{c}
y^{\prime }(t)=-\nabla f(y(t)),~t\geq 0 \\
y(0)=x_{0}.%
\end{array}%
\right.   \tag{NGS}
\end{equation}%
We distinguish two cases:
\par\noindent\underline{The first case:} Assume that there exists $t_{0}\geq 0$ such that
$f(y(t_{0}))=f^{\ast }.$ Then $\nabla f(y(t_{0}))=0.$ Therefore, from the
Cauchy Lipschitz theorem we deduce that for every $t\geq
0,~y(t)=y(t_{0})=x_{0}.$ Hence $f(x_{0})=f^{\ast }$ and $x(t)=x_{0}$ for
every $t\geq 0.$
\par\noindent\underline{The second case:} We assume here that for every $t\geq
0,f(y(t))>f^{\ast }.$ Let $t^{\ast }:=\sup \{t\geq 0:y(s)\in B_{\mathcal{H}%
}(x_{0}^{\ast },r^{\ast})$ for every $s\in \lbrack 0,t]\}$ and
define, on the interval on $[0,t^{\ast }[,$ the function $h(t)=\varphi
(f(y(t))-f^{\ast }).$ From the system (NGS) and the assumption (\ref{Ch1}), we have
for every $t\in \lbrack 0,t^{\ast }[$%
\begin{eqnarray}
-h^{\prime }(t) &=&-\varphi ^{\prime }(f(y(t)-f^{\ast })\langle \nabla
f(y(t)),y^{\prime }(t)\rangle   \nonumber \\
&=&\varphi ^{\prime }(f(y(t)-f^{\ast })\left\Vert \nabla f(y(t))\right\Vert
\left\Vert y^{\prime }(t)\right\Vert   \label{N1} \\
&\geq &\left\Vert y^{\prime }(t)\right\Vert .  \label{Naa}
\end{eqnarray}%
Therefore, for every $t\in \lbrack 0,t^{\ast }[$%
\begin{eqnarray*}
\left\Vert y(t)-x_{0}\right\Vert  &\leq &h(0)-h(t) \\
&\leq &\varphi (f(x_{0})-f(x_{0}^{\ast })),
\end{eqnarray*}%
Using now the continuity of $f$ and $\varphi $ and the fact that $\varphi
(0)=0,$ we infer that up to choose $r $ small we can assume that $t^{\ast
}=+\infty .$ Multiplying the equality (\ref{N1}) by $\varphi ^{\prime
}(f(y(t)-f^{\ast })$ and using the fact that $\left\Vert y^{\prime
}(t)\right\Vert =\left\Vert \nabla f(y(t))\right\Vert $ and the assumption (\ref{Ch1}), we get for every $t\geq 0$%
\[
-\varphi ^{\prime }(f(y(t)-f^{\ast })h^{\prime }(t)\geq 1.
\]%
Recalling that $\varphi (s)=\frac{s^{\theta }}{ \kappa\theta },$ we obtain
\begin{equation}
-\frac{1}{\kappa}\left( \kappa\theta \right) ^{\frac{\theta -1}{\theta }}\left(
h(t)\right) ^{\frac{\theta -1}{\theta }}h^{\prime }(t)\geq 1.  \label{Na}
\end{equation}%
Let us now suppose that $\theta \in ]\frac{1}{2},1].$ Integrating the last
differential inequality \ref{Na}), we infer that for every $t\geq 0$ we have
\begin{eqnarray*}
t &\leq &\frac{M_{\theta }}{2\theta -1}\left( \left( h(0)\right) ^{\frac{%
2\theta -1}{\theta }}-\left( h(t)\right) ^{\frac{2\theta -1}{\theta }%
}\right)  \\
&\leq &\frac{M_{\theta }}{2\theta -1}\left( h(0)\right) ^{\frac{2\theta -1}{%
\theta }},
\end{eqnarray*}%
where $M_{\theta }=\frac{\theta}{\kappa} \left( \kappa\theta \right) ^{\frac{\theta -1}{%
\theta }}>0,$ which is impossible, then in this case, $\theta $ must be in $%
]0,\frac{1}{2}].$ Now from a simple integration of (\ref{Na}), we
deduce that for every $t\geq 0,$
\[
h(t)\leq h(0)e^{-\frac{2}{\kappa}t},~\text{if }\theta =\frac{1}{2},
\]%
and%
\[
h(t)\leq \left( \left( h(0)\right) ^{\frac{2\theta -1}{\theta }}+\frac{%
1-2\theta }{M_{\theta }}t\right) ^{-\frac{\theta }{1-2\theta }},\text{ it }%
0<\theta <\frac{1}{2}
\]%
Therefore, by integrating the differential inequality \ref{Naa} between $t\geq 0$
and $+\infty ,$ we conclude that there exists $x_{\infty }$ in $\mathcal{H}$
such that for every $t\geq 0$ we have
\[
\left\Vert y(t)-x_{\infty }\right\Vert \leq h(0)e^{-\frac{2}{\kappa}t},~%
\text{if }\theta =\frac{1}{2},
\]%
and
\[
\left\Vert y(t)-x_{\infty }\right\Vert \leq \left( \left( h(0)\right) ^{%
\frac{2\theta -1}{\theta }}+\frac{1-2\theta }{M_{\theta }}t\right) ^{-\frac{%
\theta }{1-2\theta }},\text{ if }0<\theta <\frac{1}{2}.
\]%
Moreover, by using the fact that $h(t)=\frac{1}{\kappa\theta }\left(
f(y(t))-f^{\ast }\right) ^{\theta },$ we infer that the function $%
f(y(t))-f^{\ast }$ satisfies the following estimates for every $t\geq 0$%
\[
f(y(t))-f^{\ast }\leq \left( f(x_{0})-f^{\ast }\right) e^{-\frac{4}{\kappa%
}t}\text{ if }\theta =\frac{1}{2},
\]%
and
\[
f(y(t))-f^{\ast }\leq \left( \kappa\theta \right) ^{\frac{1}{\theta }%
}\left( \left( h(0)\right) ^{\frac{2\theta -1}{\theta }}+\frac{1-2\theta }{%
M_{\theta }}t\right) ^{-\frac{1}{1-2\theta }},\text{ if }0<\theta <\frac{1}{2%
}.
\]%
This ends the proof of Theorem \ref{Th4} since $x(t)=y(\Gamma (t)).$
\end{proof}

\end{document}